\newfont{\chen}{cmex10 at 9pt} \newfont{\chee}{cmex10 at 10pt} 
\newfont{\cheu}{cmex10 at 11pt} 
\begin{document} 
\newtheorem{lemma}{Lemma}[section]
\newtheorem*{eldemo}{Sketch of proof}
\newtheorem{satz}[lemma]{Theorem}
\newtheorem{prop}[lemma]{Proposition}
\newtheorem{defi}[lemma]{Definition}
\newtheorem{bei}[lemma]{Example} 
\newtheorem{verm}[lemma]{Conjecture} 
\newtheorem{kor}[lemma]{Corollary} 
\renewcommand{\proofname}{Proof} 
\newtheorem{bez}[lemma]{Notation} 
\newtheorem{bem}[lemma]{Remark}
\newtheorem{fall}[lemma]{Case}
\newtheorem*{example}{Example} 
\newtheorem{defsatz}[lemma]{Definition and Theorem} 
\newtheorem{lemmaap}{Lemma}[section]
\newtheorem{satzap}[lemmaap]{Theorem}
\newtheorem{propap}[lemmaap]{Proposition}
\newtheorem{defiap}[lemmaap]{Definition}
\newtheorem{beiap}[lemmaap]{Example}  
\newtheorem{korap}[lemmaap]{Corollary}  
\newtheorem{bezap}[lemmaap]{Notation} 
\newtheorem{bemap}[lemmaap]{Remark}

\def \Q{{\mathbb{Q}}}
\def \Ar{{\mathcal{A}}}
\def \G{{\mathcal{G}}} 
\def \pu{{\mbox{.}}} 
\def \ps{{\Phi^s}} 
\def \LUC{{\mathrm{LUC}}} 
\def \RUC{{\mathrm{RUC}(\G)}}
\def \RSp{{\widetilde{R}}^{(p)}} 
\def \RSq{{\widetilde{R}}^{(q)}} 
\def \wwp{{\mathfrak{P}}} 
\def \phii{{\overline{\Phi}}} 
\def \Mun{{\mathcal{M}}}
\def \Ce{{\mathrm{C}}}
\def \Cede{{\mathbb{C}}} 
\def \lpp{{L_p}} 
\def \Te{{\mathcal{T}}}
\def \sis{{\mathcal{S}}} 
\def \th{{\widetilde{\Theta}}} 
\def \mc{{\mathcal{MC}}} 
\def \ml{{\mathcal{MLUC}}} 
\def \sus{{\subseteq}} 
\def \mr{{\mathcal{MRUC}}} 
\def \N{{\mathbb{N}}}
\def \al{{\alpha}} 
\def \a{{\alpha}} 
\def \suppp{{\mathrm{supp}}} 
\def \cb{{\mathrm{cb}}} 
\def \A{{\mathcal{A}}}
\def \llu{{\mathrm{LUC}(\G)}} 
\def \B{{\mathcal{B}}}
\def \kapp{{\mathfrak{k}}} 
\def \ru{{{\RUC}^*}} 
\def \L{{\mathfrak{L}}}
\def \EL{{\mathcal{L}}}
\def \gip{{\Gamma_{l, p}}} 
\def \UC{{{\mathrm{UC}}(\G)}} 
\def \gup{{{\widetilde{\Gamma_{l, p}}}}} 
\def \gurp{{\Gamma_{r, p}}} 
\def \gurrp{{{\widetilde{\Gamma_{r, p}}}}} 
\def \M{{\rm{M}}}
\def \en{{\mathcal{N}}}
\def \gr{{\widetilde{\Gamma_r}}} 
\def \gre{{\Gamma_r}} 
\def \em{{\mathcal{M}}} 
\def \emli{{\mathcal{ML}_\infty}} 
\def \ccc{{\mathcal{C}}}
\def \Ha{{\mathfrak{H}}}
\def \l1{{L_1(\G)}} 
\def \li{{L_{\infty}(\G)}} 
\def \H{{\mathcal{H}}}
\def \Qu{{\mathfrak{Q}}}
\def \cbgl{~{\stackrel{\mathrm{cb}}{=}}~} 
\def \FS{{\mathcal{FS}}} 
\def \gammas{{\widetilde{\gamma}}} 
\def \C{{\mathcal{C}}}
\def \mr{{\mathcal{MRUC}}}
\def \Be{{\mathfrak{B}}}
\def \bar{{~|~}} 
\def \Ka{{\mathcal{K}}}
\def \rr{{\mathcal{R}}} 
\def \ceb{{\C\B(\B(L_2(\G)))}} 
\def \id{{{\mathrm{id}}}} 
\def \cep{{\C\B(\B(L_p(\G)))}} 
\def \ep{{\B(\B(L_p(\G)))}}
\def \Is{{\widetilde{I}}} 
\def \conv{{\mathrm{conv}}} 
\def \convs{{\widetilde{\mathrm{conv}}}} 
\def \convsp{{\widetilde{\mathrm{conv_p}}}} 
\def \es{{\B(\B(L_2(\G)))}}
\def \go{{\gip}} 
\def \tt{{\overline{\otimes}}} 
\def \ti{{\stackrel{\vee}{\otimes}}} 
\def \oo{{{\overline{\otimes}}}} 
\def \tp{{\widehat{\otimes}}} 
\def \otp{{\otimes}} 
\def \zet{{Z_t(\lu)}} 
\makeatletter 
\newcommand{\essu}
{\mathop{\operator@font ess\mbox{-}sup}} 
\newcommand{\essi}
{\mathop{\operator@font ess\mbox{-}inf}} 
\newcommand{\lisu}
{\mathop{\operator@font lim\,ess\mbox{-}sup}} 
\newcommand{\liin}
{\mathop{\operator@font lim\,ess\mbox{-}inf}} 
\makeatother 
\def \ga{{\Gamma_p}}
\def \gam{{\gamma_p}}
\def \gaw{{\widetilde{\ga}}}
\def \lu{{\LUC(\G)^*}}
\def \sa{{\overline{\Gamma_p(\M(\G))}^{w*}}}
\def \Ball{{\mathrm{Ball}}} 
\def \gurr{{\widetilde{\Gamma_r}}} 
\def \gur{{\Gamma_r}} 
\def \Ker{{\it{KERN}}}
\def \F{{\mathfrak{F}}}
\def \ad{{\mathrm{ad}}} 
\def \Ceee{{\mathbb{C}}}
\def \Ag{{\Ar \otimes_\gamma \Ar}}  
\def \Cee{{\mathrm{C}}}
\def \imink{{\iota_{\mathrm{minK}}}} 
\def \imin{{\iota_{\mathrm{min}}}} 
\def \Bild{{\it{BILD}}}
\def \Sz{{\mathcal{S}}}
\def \The{{\mathcal{T}}}
\def \cebe{{\C\B(\B(\Sz_2))}}
\def \cebes{{\C\B(\B(\Sz_2(L_2(G))))}} 
\def \pr{{\Sz_2 \otimes_h \B(\Sz_2) \otimes_h \Sz_2}} 
\def \su{{\rm{sup}}_{t \in \G}}
\def \mi{{\Gamma_2(\M(\G))}}
\def \Cb{{\mathcal{C}}}
\newcommand{\ens}{{\mathcal{N}}(L_p(\G))}
\newcommand{\Tee}{{\mathcal{T}}(\mathcal{H})}
\newcommand{\schutz}[1]{#1} 
\def \ma{{\Gamma_2(\lu)}}
\def \Hi{{\mathcal{H}}}
\def \bs{{\B^{\sigma}(\B(\Ha))}}
\def \bsa{{\C\B(\B(\H))}}
\def \bsi{{\B^s(\B(\Ha))}}
\def \gu{{\widetilde{\Gamma_l}}}
\def \Ge{{\mathcal{G}}} 
\def \EF{{\widetilde{F}}} 
\def \gi{{\Gamma_l}} 
\def \als{{\widetilde{\alpha}}} 
\def \betas{{\widetilde{\beta}}} 
\def \gir{{\Gamma_r}} 
\def \Se{{\mathcal{S}}} 
\def \muss{{\widetilde{\mu}}} 
\def \ge{{\Gamma}}
\def \gem{{\Gamma(\M(\G))}}
\def \MM{{\mathbf{M}}} 
\def \un{{\ell_{\infty}^*(\G)}}
\def \R{{\mathcal{R}}}
\def \cebr{{\C\B_{\R(\G)}(\B(L_2(\G)))}}
\def \Chi{{\chi}} 
\def \otg{{\otimes_\gamma}}
\def \cebre{{\C\B_{\R(\G)}(\B(\ell_2(\G)))}}
\def \cebrs{{\C\B_{\R(\G)^{'}}^{\sigma}(\B(L_2(\G)))}}
\def \cebra{{\C\B_{\R(\G)}^{\sigma}(\B(\ell_2(\G)))}}
\def \U{{\mathfrak{U}}}
\def \K{\Ka} 
\def \cebru{{\C\B_R(\B(L_2(\G)))}}
\def \lin{{\mathrm{lin}}} 
\def \cebri{{\C\B_{R}^{\sigma}(\B(\H))}}
\def \Tee{{\The(\H)}}
\def \gurrn{{\gurr^0}} 
\def \gin{{\gi^0}} 
\def \gun{{\gu^0}} 
\def \wap{{\mathrm{WAP}}} 
\def \ap{{\mathrm{AP}}} 
\def \mg{{\mathrm{M}(\G)}} 
\def \des{{\sqrt{x_j} \otimes M_{T_{\widecheck{b}}} \otimes \sqrt{x_j}}} 
\def \res{{(e_j)^{\frac{1}{p}} \otimes T_{\widecheck{b}} \otimes (e_j)^{\frac{1}{q}}}} 
\def \rem{{(e_j)^{\frac{1}{2}} \otimes M_{\widecheck{b}} \otimes (e_j)^{\frac{1}{2}}}} % 

\title{Geometry of $C^*$-algebras, the bidual of their projective tensor product, and completely bounded module maps}
\author{Matthias Neufang} 
\date{} 
\maketitle 
\begin{abstract} 
\noindent 
Let $\Ar$ be a $C^*$-algebra, and consider the Banach algebra $\Ar \otimes_\gamma \Ar$, where $\otimes_\gamma$ 
denotes the projective Banach space tensor product; if $\Ar$ is commutative, this is the Varopoulos algebra $V_\Ar$. It has been an open problem for more than 35 years to determine 
precisely when $\Ar \otimes_\gamma \Ar$ is Arens regular; cf.\ \cite{ljes}, \cite{u1}, \cite{u2}. Even the situation for commutative $\Ar$, in particular the case $\Ar = \ell_\infty$, has remained unsolved. 
We solve this classical question for arbitrary $C^*$-algebras by using von Neumann algebra and operator space methods, mainly relying on versions of the (commutative and non-commutative) 
Grothendieck Theorem, and the structure of 
completely bounded module maps. Establishing these links allows us to show 
that $\Ar \otimes_\gamma \Ar$ is Arens regular if and only if $\Ar$ has the Phillips property; equivalently, $\Ar$ is scattered and has the Dunford--Pettis Property. 
A further equivalent condition is that $\Ar^*$ has the Schur property, or, again equivalently, 
the enveloping von Neumann algebra $\Ar^{**}$ is finite atomic, i.e., a direct sum of matrix algebras. 
Hence, Arens regularity of $\Ar \otimes_\gamma \Ar$ is encoded in the geometry of the $C^*$-algebra $\Ar$. 
In case $\Ar$ is a von Neumann algebra, we conclude that $\Ar \otimes_\gamma \Ar$ 
is Arens regular (if and) only if $\Ar$ is finite-dimensional. We also show that this does not generalize to the class of non-selfadjoint dual (even commutative) operator algebras. 
Specializing to commutative $C^*$-algebras $\Ar$, we obtain that $V_\Ar$ is Arens regular if and only if $\Ar$ is scattered. In fact, we determine precisely the centre of the bidual, 
namely, $Z({V_\Ar}^{**})$ is Banach algebra isomorphic to 
$\Ar^{**} \otimes_{eh} \Ar^{**}$, where $\otimes_{eh}$ denotes the extended 
Haagerup tensor product. We deduce that $V_\Ar$ is strongly Arens irregular (if and) only if $\Ar$ is finite-dimensional. Hence, $V_\Ar$ is neither Arens regular nor 
strongly Arens irregular, if and only if $\Ar$ is non-scattered (as mentioned above, this is new even for the case 
$\Ar = \ell_\infty$). 
\end{abstract}

\section{Introduction} \label{intro}
Let $\Ar$ be a $C^*$-algebra. 
We shall be concerned with the Banach algebra $\Ar \otimes_\gamma \Ar$, where we write $\otimes_\gamma$ 
for the projective Banach space tensor product. In case $\Ar$ is commutative, this is the so-called Varopoulos algebra $V_\Ar$. As is well-known, 
any $C^*$-algebra $\Ar$ is Arens regular, i.e., the left and right Arens product on the bidual $\Ar^{**}$ coincide; 
$\Ar^{**}$ with this product is the enveloping von Neumann algebra of $\Ar$. The problem of characterizing 
those $C^*$-algebras $\Ar$ such that $\Ar \otg \Ar$ is Arens regular, has to our knowledge first been systematically studied in the Ph.D.\ thesis \cite{ljes} of M.\ Ljeskovac, 
written in 1981 under the supervision of A.M.\ Sinclair (Edinburgh); apparently independent from this work are the articles 
\cite{u1} and \cite{u2} by A.\ \"Ulger, and \cite{la-ul} by A.T.-M.\ Lau and A.\ \"Ulger. Results regarding Arens regularity of 
$\Ar \otg \Ar$ have been obtained by these authors in several cases. However, even the case of general commutative $C^*$-algebras has thus far not been settled: it is known 
that $c_0 \otg c_0$ is Arens regular while $C(\Ge) \otg C(\Ge)$ is not, where $\Ge$ is an infinite compact group; but the case of $\ell_\infty \otg \ell_\infty$, for instance, has been open. 
In summary, no characterization of commutative $C^*$-algebras 
with Arens regular projective tensor square is known, let alone the situation for non-commutative $C^*$-algebras. 
In this paper, we shall use von Neumann algebra and operator space techniques to solve this question for arbitrary $C^*$-algebras, and present further results on the 
structure of the bidual. 
\par 
Our approach may be described in essence as follows. Our starting point is to pass, based on (commutative and non-commutative) 
versions of the Grothendieck Theorem, from the projective Banach space tensor product to the Haagerup tensor product, and then to use the latter and the extended Haagerup tensor product, 
as well as their intimate link to completely bounded module maps on $B(\H)$, where $\H$ is a Hilbert space: in this fashion, the left Arens product translates into composition of operators on $B(\H)$, 
and results on automatic normality 
of these maps yield structural results on the topological centre of the bidual algebra in question. By establishing these connections, we are then able to completely characterize Arens regularity 
of the projective tensor square of an arbitrary $C^*$-algebra in terms of intrinsic properties of the latter. As we shall prove, Arens regularity of 
$\Ar \otg \Ar$ 
is encoded in the geometry of the $C^*$-algebra $\Ar$. 
\par 
In section \ref{intro} we provide some preliminary results pertaining to $C^*$- and operator space theory which will be needed in the sequel. In section \ref{comme}, 
for commutative $C^*$-algebras $\Ar$, we determine precisely the centre of the bidual of the Varopoulos algebra $V_{\Ar}$, 
showing that it is Banach algebra isomorphic to $\Ar^{**} \otimes_{eh} \Ar^{**}$, 
where $\otimes_{eh}$ denotes the extended Haagerup tensor product. It follows that $V_\Ar$ is strongly Arens irregular (if and) only if $\Ar$ is finite-dimensional. 
\par 
Section \ref{general} gives our main result, stating that $\Ar \otimes_\gamma \Ar$ is Arens regular if and only if $\Ar$ has the Phillips property; equivalently, $\Ar$ is scattered and has the 
Dunford--Pettis Property. Further equivalent statements are that $\Ar^*$ has the Schur property, or, again equivalently, 
the enveloping von Neumann algebra $\Ar^{**}$ is finite atomic, i.e., a direct sum of matrix algebras. In fact, we prove that if $\Ar$ has the Phillips property, then 
$\Ar \otimes_\gamma \mathcal{B}$ is Arens regular for any $C^*$-algebra $\mathcal{B}$. Moreover, we deduce that for a von Neumann algebra $\Ar$, Arens regularity of $\Ar \otg \Ar$ is 
equivalent to $\Ar$ being finite-dimensional. We also show that this characterization does not extend, in general, 
to non-selfadjoint dual operator algebras, even commutative ones: indeed, given any $p \in [1, \infty )$, the algebra 
$\ell_p$ ($O \ell_p$) with pointwise product is (completely) isomorphic and $w^*$-homeomorphic 
to a dual operator algebra with Arens regular projective tensor square; here, $O \ell_p$ denotes Pisier's operator space 
structure on $\ell_p$, obtained by complex interpolation. 
\par 
Considering commutative $C^*$-algebras $\Ar$, we obtain that $V_\Ar$ is Arens regular if and only if $\Ar$ is scattered. Combined with our earlier result, this 
shows that the Varopoulos algebra $V_\Ar$ is neither Arens regular nor strongly Arens irregular (i.e., the centre $Z({V_\Ar}^{**})$ lies strictly between 
$V_\Ar$ and its bidual), if and only if $\Ar$ is non-scattered. 
This result is already new for the fundamental example $\ell_\infty$. Indeed, already the question whether $\ell_\infty \otg \ell_\infty$ is Arens regular has been open since 1981. 

\section{Preliminaries} \label{prelim} 
We start by recalling the definitions of geometric properties for a $C^*$-algebra $\Ar$ and a Banach space $E$ which we shall encounter. 
\begin{itemize} 
\item $\Ar$ is scattered if any positive functional on $\Ar$ is a finite or countable (pointwise) sum of pure functionals (cf.\ \cite{jen}). 
\item $E$ has the Dunford--Pettis property (DPP) if any weakly compact operator from $E$ into any Banach space is completely continuous; equivalently, 
for any weakly null sequences $x_n$ in $E$ and $f_n$ in $E^*$, the sequence $\langle f_n , x_n \rangle$ tends to $0$ (going back to Grothendieck's classical work \cite{gro}). 
\item $E$ has the Phillips property if the Dixmier projection $E^{***} \to E^*$ is sequentially $w^*$--norm continuous (cf.\ \cite{fu}). 
\item $E$ has the Schur property if in $E$ any weakly convergent sequence is norm convergent. 
\end{itemize} 
We now review some related structure results for $C^*$-algebras; see \cite[Corollary VII-10]{ggms}, \cite[Theorem 3]{chu} and \cite[Theorem 2.2]{jen} for the first, 
and \cite[Lemma 3.1]{fu} together with \cite[Theorems 3.4 and 3.6]{la-ul} for the second. 
\begin{satz} \label{gen1} 
Let $\Ar$ be a $C^*$-algebra. Then the following are equivalent: 
\begin{itemize} 
\item[(i)] $\Ar$ is scattered; 
\item[(ii)] $\Ar$ does not contain an isomorphic copy of $\ell_1$; 
\item[(iii)] $\Ar^*$ has the Radon--Nikodym Property (RNP); 
\item[(iv)] $\Ar^{**}$ is atomic, i.e., a direct sum of $B(\H_i)$ for Hilbert spaces $\H_i$. 
\end{itemize} 
\end{satz}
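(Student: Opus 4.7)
The plan is to assemble this equivalence from three well-known results in the literature, treating the implications in a convenient cyclic order and checking each step against the cited papers. Since the theorem is presented as a compilation, the task is mainly to verify that the pieces fit together cleanly.

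First, I would establish (i) $\Leftrightarrow$ (iv), Jensen's theorem \cite{jen}. For (iv) $\Rightarrow$ (i): if $\Ar^{**} \cong \bigoplus_i B(\H_i)$ is atomic, then every normal positive functional on $\Ar^{**}$ decomposes as a countable $\ell_1$-sum of vector states against minimal (rank-one) projections, which are pure; restricting to $\Ar$ and noting that any positive functional on $\Ar$ is the restriction of a normal positive functional on its bidual, one obtains scatteredness. For (i) $\Rightarrow$ (iv): using the bijective correspondence between pure states of $\Ar$ and minimal projections of $\Ar^{**}$, I would show that every nonzero central projection of $\Ar^{**}$ dominates a minimal projection, arguing that the atomic part $z \in \Ar^{**}$ is central and satisfies $z = 1$, since otherwise the corner $(1-z)\Ar^{**}(1-z)$ would admit a non-scattered restriction to $\Ar$.

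Next, I would record (iii) $\Leftrightarrow$ (iv) as a consequence of the classical fact that a von Neumann algebra has RNP predual if and only if it is atomic; since $\Ar^*$ is exactly the predual of the von Neumann algebra $\Ar^{**}$, the equivalence is immediate. Finally, (i) $\Leftrightarrow$ (ii) follows from \cite{chu} (and in parallel from \cite{ggms}): the easy direction (iii) $\Rightarrow$ (ii) is that if $\Ar^*$ has the RNP then every separable subspace of $\Ar$ has separable dual, which rules out an isomorphic copy of $\ell_1$ (whose dual $\ell_\infty$ is non-separable). The converse (ii) $\Rightarrow$ (i) is the $C^*$-algebraic input: a non-scattered $C^*$-algebra $\Ar$ admits a non-atomic (i.e., diffuse) pure-state space component, and one constructs an isomorphic copy of $\ell_1$ inside $\Ar$ using a sequence of pairwise orthogonal positive elements supported on a diffuse measure.

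The main obstacle is the direction (ii) $\Rightarrow$ (i), which is not a formal Banach-space argument but requires genuine $C^*$-structure — specifically, the construction of an $\ell_1$-sequence from the diffuse part of the pure-state space, as in Chu \cite{chu}. Once this is in place, the remaining implications follow from classical von Neumann algebra theory (for (iii) $\Leftrightarrow$ (iv)) and from Jensen's structural description of scattered $C^*$-algebras (for (i) $\Leftrightarrow$ (iv)), and it only remains to assemble them into a single statement.
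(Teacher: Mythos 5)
The paper offers no proof of this theorem at all: it is stated as a review of known results, with the equivalences attributed to \cite{ggms}, \cite{chu} and \cite{jen}. Your proposal is therefore the same in spirit --- an assembly of the literature --- and your logical skeleton (i)$\Leftrightarrow$(iv), (iii)$\Leftrightarrow$(iv), (iii)$\Rightarrow$(ii), (ii)$\Rightarrow$(i) does close the cycle; the sketches for (iv)$\Rightarrow$(i) (normal states of an atomic bidual are countable sums of vector states, which restrict to pure states of $\Ar$), for (i)$\Rightarrow$(iv) (supports of pure functionals are minimal projections, so a nonzero diffuse central summand of $\Ar^{**}$ would carry a state of $\Ar$ with no decomposition into pure functionals), for (iii)$\Leftrightarrow$(iv) (a von Neumann algebra is atomic iff its predual has the RNP), and for (iii)$\Rightarrow$(ii) (Stegall: RNP of the dual forces separable subspaces to have separable duals, ruling out $\ell_1$) are all sound.

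The one concrete flaw is your parenthetical description of the hard direction (ii)$\Rightarrow$(i): a sequence of pairwise orthogonal positive norm-one elements is isometrically equivalent to the canonical basis of $c_0$, not of $\ell_1$, so the construction you indicate would not produce a copy of $\ell_1$. The standard argument runs differently: if $\Ar$ is non-scattered, it contains a non-scattered commutative $C^*$-subalgebra $C_0(\Omega)$ (this reduction is the genuinely $C^*$-algebraic step, cf.\ Jensen and \cite{kusuda}); by the classical Pelczynski--Semadeni theory, a non-scattered $C_0(\Omega)$ contains an isomorphic copy of $C[0,1]$ (via a continuous surjection of a compact subset onto $[0,1]$), and $C[0,1]$ contains $\ell_1$ isomorphically --- an $\ell_1$-sequence there must be built from a Rademacher-type (independent) system, precisely because orthogonally supported elements only ever span $c_0$. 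Since you defer this direction to \cite{chu} and \cite{ggms} in any case, the gap is in the sketch rather than in the logical architecture, but as written that step would fail if carried out literally.
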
 
\begin{satz} \label{gen2} 
Let $\Ar$ be a $C^*$-algebra. Then the following are equivalent: 
\begin{itemize} 
\item[(i)] $\Ar$ has the Phillips property; 
\item[(ii)] $\Ar$ is scattered and has the DPP; 
\item[(iii)] $\Ar^*$ has the Schur property; 
\item[(iv)] $\Ar^{**}$ is finite atomic, i.e., a direct sum of matrix algebras. 
\end{itemize} 
\end{satz}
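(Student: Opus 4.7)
The plan is to run the cycle $(iv) \Rightarrow (iii) \Rightarrow (ii) \Rightarrow (iv)$ and then tie in $(i)$ through the equivalence $(i) \Leftrightarrow (iii)$, using Theorem \ref{gen1} as a structural backbone throughout: once $\Ar$ is known to be scattered, we have the atomic decomposition $\Ar^{**} \cong \bigoplus_i B(\H_i)$ at our disposal, and the entire question reduces to deciding when each $\H_i$ is finite-dimensional.

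The implication $(iv) \Rightarrow (iii)$ is immediate: if $\Ar^{**} \cong \bigoplus_i M_{n_i}$, then $\Ar^* \cong \bigoplus_i^{\ell_1} (M_{n_i})^*$, an $\ell_1$-direct sum of finite-dimensional Banach spaces, and such $\ell_1$-sums have the Schur property by a standard gliding-hump argument. For $(iii) \Rightarrow (ii)$, the DPP of $\Ar$ is immediate from the Schur property of $\Ar^*$: given weakly null sequences $x_n \in \Ar$ and $\phi_n \in \Ar^*$, Schur forces $\|\phi_n\| \to 0$, whence $\phi_n(x_n) \to 0$. Scatteredness of $\Ar$ follows because the Schur property rules out any isomorphic copy of $L_1[0,1]$ inside $\Ar^*$ (the Rademacher functions witness failure of Schur in $L_1$); on the other hand, the predual of any non-trivial diffuse summand of $\Ar^{**}$ always contains such an $L_1$-copy, coming via preduality from an abelian diffuse normal quotient of $\Ar^{**}$. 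Consequently $\Ar^{**}$ is purely atomic, and Theorem \ref{gen1} delivers scatteredness. Finally, for $(ii) \Rightarrow (iv)$, atomicity gives $\Ar^{**} \cong \bigoplus_i B(\H_i)$; the Chu--Iochum transfer of the DPP from a $C^*$-algebra to its enveloping von Neumann algebra then forces each summand $B(\H_i)$ to inherit the DPP, which happens only when $\H_i$ is finite-dimensional.

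For the tie-in with $(i)$, the direction $(i) \Rightarrow (iii)$ is short: any weakly null $(\phi_n) \subset \Ar^*$ is also $w^*$-null when viewed in $\Ar^{***}$ via the canonical inclusion, so the Phillips property directly yields $\|\phi_n\| \to 0$. Conversely, $(iv) \Rightarrow (i)$ is verified on the decomposition: a bounded $w^*$-null sequence $(F_n) \subset \Ar^{***}$ restricts to a bounded $w^*$-null sequence in $\Ar^* \cong \bigoplus_i^{\ell_1} (M_{n_i})^*$, and norm convergence of this restriction follows by combining coordinatewise finite-dimensional convergence with the tail control supplied by pairing $F_n$ against the central projections of $\Ar^{**}$.

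The main obstacle I anticipate lies in the $L_1$-embedding argument inside $(iii) \Rightarrow (ii)$: converting the Banach-space Schur property of $\Ar^*$ into the absence of a diffuse summand of $\Ar^{**}$ requires the structure theory of noncommutative $L_1$-spaces, specifically that every diffuse abelian normal quotient of the enveloping von Neumann algebra gives rise by preduality to an $L_1$-subspace of $\Ar^*$. The Chu--Iochum transfer of DPP to the bidual is the other delicate ingredient: this is a genuinely $C^*$-algebraic fact which must be cited rather than derived from general Banach space considerations.
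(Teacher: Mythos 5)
Your overall skeleton (the cycle $(iv)\Rightarrow(iii)\Rightarrow(ii)\Rightarrow(iv)$ plus $(i)\Leftrightarrow(iii)/(iv)$) is reasonable, and several steps are fine: $(iv)\Rightarrow(iii)$ via the $\ell_1$-sum of finite-dimensional blocks, ``$\Ar^*$ Schur $\Rightarrow$ DPP'', and $(i)\Rightarrow(iii)$ are all correct, and citing Chu--Iochum for the genuinely $C^*$-algebraic DPP input in $(ii)\Rightarrow(iv)$ is legitimate. Note, however, that the paper does not prove this theorem at all: it is presented as a known structure result, with the proof delegated to \cite[Lemma 3.1]{fu} together with \cite[Theorems 3.4 and 3.6]{la-ul}. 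Measured as a from-scratch proof, your proposal has two genuine gaps. The more serious one is $(iv)\Rightarrow(i)$, which is exactly the Phillips-lemma-type content of the theorem and is asserted rather than proved. Decompose $F_n\in\Ar^{***}$ into its normal part $\varphi_n\in\Ar^*$ and its singular part $S_n$ (the latter vanishes on the $c_0$-sum inside $\Ar^{**}=\oplus_i B(\H_i)=\oplus_i M_{n_i}$). Pairing $F_n$ against central projections or against elements of the $c_0$-sum only yields coordinatewise convergence of $\varphi_n$, and coordinatewise convergence of a bounded sequence in an $\ell_1$-sum is far from norm convergence: what is missing is tail smallness \emph{uniform in $n$}. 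Your sketch would, if it worked, already ``prove'' that $w^*$-null sequences in $\ell_1=c_0^*$ are norm null, which is false (the unit vector basis); the classical Phillips lemma for $c_0$ is true only because one exploits $w^*$-nullity against all of $\ell_\infty$, normal and singular directions simultaneously, via a Rosenthal/gliding-hump type disjointification. The same mechanism, generalized to $\ell_\infty$-sums of matrix algebras of unbounded finite size, is precisely what \cite[Lemma 3.1]{fu} supplies, and it is the crux you have skipped.

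The second gap is in the scatteredness part of $(iii)\Rightarrow(ii)$. An ``abelian diffuse normal quotient of $\Ar^{**}$'' need not exist: normal quotients of a von Neumann algebra are its reductions by central projections, so if the diffuse part of $\Ar^{**}$ is, say, a type $II_1$ or type $III$ factor summand, it has no abelian normal quotient whatsoever. What your argument really needs is a diffuse abelian subalgebra which is the range of a \emph{normal conditional expectation} (routine in the finite or semifinite case via a trace, but requiring modular-theoretic input in type $III$), so the claim that the predual of any diffuse summand contains $L_1[0,1]$ is not obtained by the route you describe. Within the toolkit of this paper there is a much cleaner fix: if $\Ar$ is not scattered then, by Theorem \ref{gen1}, $\Ar$ contains an isomorphic copy of $\ell_1$, hence by the Pe\l czy\'nski--Hagler theorem $\Ar^*$ contains an isomorphic copy of $L_1[0,1]$, contradicting the Schur property of $\Ar^*$. (Incidentally, do not be tempted by the shortcut ``Schur $\Rightarrow$ RNP $\Rightarrow$ atomic'': the Schur property does not imply the RNP in general, by the Bourgain--Rosenthal example, so passing through copies of $\ell_1$ and $L_1[0,1]$, or through genuine von Neumann algebra structure, is unavoidable.) With these two repairs -- the Hagler-type argument for scatteredness and a real Phillips-lemma argument (or the citation of \cite{fu}) for $(iv)\Rightarrow(i)$ -- your outline would close.
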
 
We now fix some terminology. By ``$w^*$-continuous" we mean ``$w^*$-$w^*$-continuous". 
We write $\Box$ for the left Arens product in the bidual of a Banach algebra $\mathcal{A}$. Recall that, for $X , Y \in \mathcal{A}^{**}$, 
we have 
$$\langle X \Box Y , f \rangle := \langle X , Y \Box f \rangle \quad (f \in \mathcal{A}^*)$$
where 
$$\langle Y \Box f , a \rangle := \langle Y , f \Box a \rangle \quad (a \in \mathcal{A});$$ 
of course, $\langle f \Box a , b \rangle := \langle f , ab \rangle$ for all $b \in \mathcal{A}$. Note that, if $x_i$ and $y_j$ are nets in $\mathcal{A}$ converging 
$w^*$ to $X$ and $Y$, respectively, we have 
$$X \Box Y = \text{$w^*$-}\lim_i \text{$w^*$-}\lim_j x_i y_j .$$ 
Obviously, the map $\mathcal{A}^{**} \ni X \mapsto X \Box Y$ is $w^*$-continuous. The set of all $X \in \mathcal{A}^{**}$ such that the map 
$\mathcal{A}^{**} \ni Y \mapsto X \Box Y$ is $w^*$-continuous, is called the (left) topological centre, denoted by $Z_t(\mathcal{A}^{**})$. 
Now, $\Ar$ is said to be Arens regular if $Z_t(\mathcal{A}^{**})$ is maximal, i.e., equals $\Ar^{**}$ (this is equivalent to saying that the left and right Arens 
products on $\Ar^{**}$ coincide, but we shall only use the left one); also, $\Ar$ is said to be (left) strongly Arens irregular if $Z_t(\mathcal{A}^{**})$ is minimal, i.e., equals $\Ar$. 
Topological centres have been studied intensely in abstract harmonic analysis in recent years; see, e.g., the articles \cite{dl}, \cite{dls}, \cite{mmm}, \cite{hnr0}, \cite{la-lo2}, 
\cite{blms} and \cite{losert}. Let us mention that, 
for any locally compact group $\Ge$, the group algebra $L_1(\Ge)$ and the measure algebra $M(\Ge)$ are strongly Arens irregular; 
these are the main theorems of \cite{la-lo} and \cite{losert}, respectively, and we shall make use of the former result in section \ref{general}. 
\par 
We write $Z(\mathcal{A})$ for the centre of an algebra $\mathcal{A}$. Note that if $\Ar$ is a commutative Banach algebra, 
$Z_t(\mathcal{A}^{**})$ equals the centre $Z(\Ar^{**})$. Even in this situation, determining $Z_t(\mathcal{A}^{**})$ can be very difficult. For instance, denoting by $A(\Ge)$ 
the Fourier algebra of a locally compact group $\Ge$, the centre $Z(A(\Ge)^{**})$ is not known in general, even when $\Ge$ is discrete or compact. Indeed, it is open if 
$A(\Ge)$ can be Arens regular for discrete groups without infinite amenable subgroups nor copies of free groups, such as Olshanskii's `Tarski monsters'; cf.\ \cite{forrest}, \cite{blms}. 
In the case of infinite compact groups, it is known that $A(\Ge)$ is not Arens regular; cf.\ \cite{forrest}. As the projective tensor product $\Ar := C(\Ge) \otimes_\gamma C(\Ge)$ contains 
a subalgebra isomorphic to $A(\Ge)$, by Varopoulos's classical work, one thus obtains that $\Ar$ is not Arens regular, as mentioned in section \ref{intro}; cf.\ \cite{u2}. 
However, determining when $A(\Ge)$ is strongly Arens irregular, is more complicated: for instance, 
$A(SU(3))$ is not, but $A(SU(3)^{\aleph_0})$ and $A(SU(3)^{\aleph_1})$ are; cf.\ \cite{unpub}, \cite{la-lo2}, \cite{mmm}. 
\par 
Given a set $S \in B(\H)$, where $\H$ is a Hilbert space, we write $S'$ for its commutant. 
Let $\Ar \subseteq B(\H)$ and $\B \subseteq B(\Ka)$ be $C^*$-algebras. We denote by $\Ar^{op}$ the opposite algebra. 
We denote by $CB_{\Ar , \B} (B(\Ka , \H))$ the algebra of completely bounded operators on $B(\Ka , \H)$ that 
are left $\Ar$- and right $\B$-module maps, i.e., $\Phi (aTb) = a \Phi(T) b$ for all $T \in B(\Ka , \H)$ and $a \in \Ar$, $b \in \B$. If $\H = \Ka$ and $\Ar = \B$, we simply write $CB_{\Ar} (B(\H))$. 
We denote by $CB^{\sigma}_{\Ar , \B} (B(\Ka , \H))$ the subalgebra of normal (i.e., $w^*$-continuous) such maps. 
Of particular importance in our approach is the work of Magajna \cite{mag}, which contains a further development of the results obtained in \cite{HW} by Hofmeier--Wittstock. 
\par 
We write $\otg$ for the projective Banach space tensor product, 
$\otimes_h$ for the Haagerup tensor product, $\otimes_{eh}$ for the 
extended Haagerup tensor product, and $\otimes_{\sigma h}$ for the normal Haagerup tensor product. For the convenience of the reader, we shall briefly recall the definitions and some 
fundamental relations. For Banach spaces $E$ and $F$, the projective tensor product $E \otg F$ is the completion of the algebraic tensor product $E \otimes F$ with respect to 
the norm 
$$\| u \|_\gamma = \inf \left \{  \sum_{k=1}^m \| x_k \| \| y_k \| \mid u = \sum_{k=1}^m x_k \otimes y_k \right \} .$$ 
Now let $E$ and $F$ be operator spaces, $n \in \mathbb{N}$, and $u = [u_{ij}] \in M_n(E \otimes F)$. 
The Haagerup norm of $u$ is defined by $\| u \|_h = \inf \{ \| x \| \| y \| \}$, 
with the infimum taken over all $p \in \mathbb{N}$, and all representations $u = x \odot y$, where $x \in M_{n,p} (E)$ and $y \in M_{p,n} (F)$; 
here, $x \odot y = [ \sum_{k=1}^p x_{ik} \otimes y_{kj} ]$. The Haagerup tensor product $E \otimes_h F$ is the completion of $E \otimes F$ 
with respect to the Haagerup matrix norms. For $C^*$-algebras $\Ar$ and $\B$, the Haagerup norm on $\Ar \otimes \B$ can be expressed as 
$$\| u \|_h = \inf \left \{ \left\| \sum_{k=1}^n a_k {a_k}^* \right\|^{\frac{1}{2}} \left\| \sum_{k=1}^n {b_k}^* b_k \right\|^{\frac{1}{2}} \mid u = \sum_{k=1}^n a_k \otimes b_k \right \} .$$ 
Note that on $E \otimes F$ we have $\| \cdot \|_h \leq \| \cdot \|_{\gamma}$. If $\Ar$ and $\B$ are commutative $C^*$-algebras, the classical Grothendieck Inequality says that 
$$\| \cdot \|_\gamma \leq K \| \cdot \|_h ,$$ 
where $K$ is the Grothendieck constant; cf., e.g., \cite{ks}. 
\par 
The extended Haagerup tensor product $E \otimes_{eh} F$ is defined as the subspace of $(E^* \otimes_h F^*)^*$ corresponding to the 
completely bounded bilinear forms $E^* \times F^* \to \mathbb{C}$ which are separately $w^*$-continuous. Note that the $w^*$-Haagerup tensor product of dual operator spaces is defined by 
$$E^* \otimes_{w^*h} F^* = (E \otimes_h F)^* ;$$ 
so we have $E^* \otimes_{eh} F^* = E^* \otimes_{w^*h} F^*$. Finally, the normal Haagerup tensor product of dual operator spaces is defined as 
$$E^* \otimes_{\sigma h} F^* = (E \otimes_{eh} F)^* .$$ 
Hence, we have $(E \otimes_h F)^{**} = (E^* \otimes_{eh} F^*)^* = E^{**} \otimes_{\sigma h} F^{**}$. 
More information on these tensor products can be found, e.g., in \cite{BL1}, \cite{ER-book}, \cite{pisier}, and \cite{ER}. 
\par 
As noted above, the classical Grothendieck Inequality yields a canonical isomorphism between $\Ar \otg \Ar$ and $\Ar \otimes_h \Ar$ for 
commutative $C^*$-algebras $\Ar$, which plays a significant role in our approach; as do the consequences of the non-commutative Grothendieck Inequality 
-- see the seminal work of Pisier--Shlyakhtenko \cite{PS} and Haagerup--Musat \cite{HM} -- drawn in \cite{arch} regarding Arens regularity of the algebras 
$\Ar \otg \Ar$ and $\Ar \otimes_h \Ar$, where $\Ar$ is an arbitrary $C^*$-algebra. 
In the following we collect the results from \cite{arch} which will be of importance for us. (Note that the projective operator space tensor product is of course crucial in the work \cite{arch}, 
but we shall not explicitly use it.) 
\begin{satz} \label{archivsatz} 
Let $\Ar$ and $\B$ be $C^*$-algebras. 
\begin{itemize} 
\item[(i)] If $\Ar \otimes_\gamma \B$ is Arens regular, then $\Ar \otimes_h \B$ is as well. 
\item[(ii)] If $\Ar \otimes_h \B$ and $\Ar \otimes_h \B^{op}$ are Arens regular, then so is $\Ar \otimes_\gamma \B$. 
\end{itemize} 
\end{satz}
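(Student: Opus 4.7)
The theorem compares Arens regularity across three Banach algebra completions of the common algebraic tensor product $\Ar \otimes \B$: the projective one (with $\B$-multiplication), the Haagerup one (with $\B$-multiplication), and the ``opposite'' Haagerup one (with $\B^{op}$-multiplication). The comparison is governed by norm inequalities in one direction and, crucially, by the non-commutative Grothendieck Inequality in the other.

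For (i), the bound $\| \cdot \|_h \leq \| \cdot \|_\gamma$ on $\Ar \otimes \B$ extends the identity to a contractive algebra homomorphism $\pi : \Ar \otg \B \to \Ar \otimes_h \B$ with dense range. I would then appeal to the general principle that Arens regularity transfers along continuous algebra homomorphisms with dense range: passing to biduals, $\pi^{**}$ is a homomorphism for both Arens products and has $w^*$-dense image, so the hypothesis that the two products coincide on $(\Ar \otg \B)^{**}$ propagates to $(\Ar \otimes_h \B)^{**}$ via separate $w^*$-continuity of each Arens product in the appropriate variable together with $w^*$-density.

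For (ii), the main input is the non-commutative Grothendieck Inequality of Pisier--Shlyakhtenko \cite{PS} and Haagerup--Musat \cite{HM}, which decomposes, up to a universal constant, every bounded bilinear form $u : \Ar \times \B \to \Cede$ as $u = u_1 + u_2$ with $u_1 \in (\Ar \otimes_h \B)^*$ and $u_2 \in (\Ar \otimes_h \B^{op})^*$. Given this, I would invoke Pym's iterated-limit characterization of Arens regularity: for bounded sequences $(x_n), (y_m)$ in $\Ar \otg \B$ and $u \in (\Ar \otg \B)^*$, test the equality of $\lim_n \lim_m u(x_n y_m)$ and $\lim_m \lim_n u(x_n y_m)$ against each of $u_1$ and $u_2$ separately. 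The $u_1$-piece is controlled directly by Arens regularity of $\Ar \otimes_h \B$, since $(x_n), (y_m)$ are automatically $h$-bounded and the multiplication in $\Ar \otg \B$ agrees with that in $\Ar \otimes_h \B$. The $u_2$-piece pairs the $\B$-multiplication with a functional bounded in the $\B^{op}$-Haagerup norm, and this is where the hypothesis on $\Ar \otimes_h \B^{op}$ enters, most cleanly after passing through the operator space projective tensor product $\Ar \tp \B$ that is alluded to right after the theorem statement.

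The principal obstacle is in (ii): reconciling the decomposition of $u$, which lives on the dual side, with the Banach algebra multiplication in $\Ar \otg \B$, which is the $\B$-product rather than the $\B^{op}$-product that $u_2$ is naturally compatible with. This mismatch is bridged by the fact that, at the level of $\Ar \tp \B$, both orderings are simultaneously visible, and one can flip the second factor without distorting the Haagerup norm by more than a universal constant. A secondary technical point is justifying the dense-range transfer argument in (i), where one must check that $w^*$-density of $\pi^{**}((\Ar \otg \B)^{**})$ combined with separate $w^*$-continuity of the Arens products suffices to carry the coincidence of left and right products across $\pi^{**}$.
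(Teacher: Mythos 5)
Your argument for (i) rests on the principle that Arens regularity transfers along a continuous algebra homomorphism with dense range, and that principle is false. A standard counterexample: the inclusion $L_2(\mathbb{T}) \hookrightarrow L_1(\mathbb{T})$ (convolution algebras) is a contractive homomorphism with dense range, $L_2(\mathbb{T})$ is reflexive and hence Arens regular, yet $L_1(\mathbb{T})$ is not Arens regular. The reason your $w^*$-density argument cannot be completed is the one you yourself flag as a ``secondary technical point'': the left Arens product $X \Box Y$ is $w^*$-continuous in $X$ for fixed $Y$, but in the variable $Y$ only when $X$ lies in the original algebra (or its topological centre), and elements of $\pi^{**}\bigl((\Ar \otg \B)^{**}\bigr)$ need not lie there; moreover, since $\pi$ is not open onto its range, bounded sequences in $\Ar \otimes_h \B$ do not lift to bounded sequences in $\Ar \otg \B$, so the double-limit criterion cannot be pulled back either. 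What actually makes (i) work is specific to the situation: Arens regularity of $\Ar \otg \B$ is equivalent (\"Ulger's criterion, \cite{u1}) to biregularity of \emph{every} bounded bilinear form on $\Ar \times \B$, while Arens regularity of $\Ar \otimes_h \B$ is controlled by the subclass of forms in $(\Ar \otimes_h \B)^* \subseteq (\Ar \otg \B)^*$, using the representation of elements of the open unit ball of $\Ar \otimes_h \B$ as infinite sums $\sum a_k \otimes b_k$ with row/column norm control; this is how the cited source proceeds. For the record, the paper itself does not reprove the theorem at all: its proof is a two-line citation of \cite{arch} (Theorems 2.6 and 2.7 for (i), Theorem 2.9 for (ii)).

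Your sketch of (ii) is essentially the strategy of \cite{arch}: decompose an arbitrary bounded bilinear form on $\Ar \times \B$ via the non-commutative Grothendieck theorem (\cite{PS}, \cite{HM}) into a piece controlled by the Haagerup norm and a piece controlled by the ``transposed'' Haagerup norm, and test the iterated limits against each piece. But the point you defer --- reconciling the $u_2$-piece, which is compatible with the $\B^{op}$-multiplication, with the product of $\Ar \otg \B$ --- is exactly where the content of the theorem lies (it is the reason $\Ar \otimes_h \B^{op}$ appears in the hypothesis at all), and ``passing through the operator space projective tensor product'' with a flip ``up to a universal constant'' is not an argument: the flip on the Haagerup tensor product is not bounded, and the $u_2$-piece must be handled by an honest double-limit computation relative to the algebra $\Ar \otimes_h \B^{op}$. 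As it stands, then, (i) needs a different proof and (ii) needs its key step supplied; the quickest complete fix for both is the route through \"Ulger's biregularity criterion as carried out in \cite{arch}.
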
 
\begin{proof} 
(i) follows from \cite[Theorems 2.6, 2.7]{arch}, and (ii) from \cite[Theorem 2.9]{arch}. 
\end{proof} 
In the sequel, we shall use the following identification without explicit reference. 
\begin{satz}
Let $\Ar$ and $\B$ be $C^*$-algebras, and let $\R := \Ar^{**} \subseteq B(\H)$, $\Se := \B^{**} \subseteq B(\Ka)$. Then 
$$(\Ar \otimes_h \B^{op})^{**} = CB_{\R' , \Se '} (B(\Ka , \H))$$ 
via a completely isometric $w^*$-homeomorphism $\Theta$ such that $$(\Theta (a \otimes b))(T) = aTb ~\text{for all}~ a \in \Ar , b \in \B ,$$ and 
$$\Theta (X \Box Y) = \Theta(X) \Theta(Y) ~\text{for all}~ X,Y \in (\Ar \otimes_h \B)^{**}.$$ 
Moreover, by restricting $\Theta$, one obtains the completely isometric multiplicative $w^*$-homeomorphic identification 
$$\R \otimes_{eh} \Se^{op} = CB^{\sigma}_{\R' , \Se '} (B(\Ka , \H)) .$$
\end{satz}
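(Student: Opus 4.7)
The plan is to combine the general duality $(E \otimes_h F)^{**} = E^{**} \otimes_{\sigma h} F^{**}$ recorded in Section \ref{prelim} with the Magajna--Hofmeier--Wittstock representation (cf.\ \cite{HW}, \cite{mag}) of the normal Haagerup tensor product of von Neumann algebras as a space of completely bounded module maps.

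First, I would identify $(\B^{op})^{**}$ with $\Se^{op} := (\B^{**})^{op}$: the $w^*$-topology and involution transfer through the opposite-algebra operation, and by definition the product is reversed. Inserting $F = \B^{op}$ in the duality above then gives $(\Ar \otimes_h \B^{op})^{**} = \R \otimes_{\sigma h} \Se^{op}$. Next I would invoke the theorem that for any von Neumann algebras $\R \sus B(\H)$, $\Se \sus B(\Ka)$, the assignment $a \otimes b \mapsto (T \mapsto aTb)$ extends to a completely isometric $w^*$-homeomorphism
$$\R \otimes_{\sigma h} \Se^{op} \longrightarrow CB_{\R', \Se'}(B(\Ka, \H)),$$
which restricts to a completely isometric $w^*$-homeomorphism $\R \otimes_{eh} \Se^{op} \longrightarrow CB^{\sigma}_{\R', \Se'}(B(\Ka, \H))$. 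Defining $\Theta$ as the composition of these two identifications, the formula $\Theta(a \otimes b)(T) = aTb$ is then immediate on elementary tensors.

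The substantive point is multiplicativity, $\Theta(X \Box Y) = \Theta(X) \Theta(Y)$, which I would handle in three steps of increasing generality. \textbf{(a)} In $\Ar \otimes_h \B^{op}$ we have $(a \otimes b)(a' \otimes b') = aa' \otimes (b'b)$ since the product in $\B^{op}$ is reversed, and on the other side $\Theta(a \otimes b) \circ \Theta(a' \otimes b')$ sends $T$ to $a(a'Tb')b = aa'Tb'b$, so the two maps agree; by bilinearity and norm continuity, $\Theta$ is multiplicative on $\Ar \otimes_h \B^{op}$. \textbf{(b)} For fixed $x \in \Ar \otimes_h \B^{op}$, the map $\Theta(x)$ is a finite sum of inner-corner maps $T \mapsto a_i T b_i$ and hence is normal; picking a net $y_j \to Y$ in $w^*$ so that $x \Box Y = w^*\text{-}\lim_j x y_j$, both $w^*$-continuity of $\Theta$ and $w^*$-continuity of composition by the normal map $\Theta(x)$ in the right slot yield $\Theta(x \Box Y) = \Theta(x) \Theta(Y)$. \textbf{(c)} For general $X$, choose $x_i \to X$ in $w^*$; by the definition of $\Box$ and its left $w^*$-continuity, $X \Box Y = w^*\text{-}\lim_i x_i \Box Y$, and combining (b) with the general $w^*$-continuity of composition in the left slot on $CB_{\R',\Se'}(B(\Ka,\H))$ delivers $\Theta(X \Box Y) = \Theta(X) \Theta(Y)$.

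The restriction statement for $\R \otimes_{eh} \Se^{op}$ is part of the Magajna--Hofmeier--Wittstock representation cited above, and multiplicativity on this subalgebra is inherited from the larger one. I expect step (c) to be the main obstacle: since $\Box$ is not $w^*$-continuous in the right variable, one cannot simply approximate $Y$ by elements of $\Ar \otimes_h \B^{op}$ and directly transport the identity proved at the algebra level. The argument nonetheless goes through because composition on $CB_{\R',\Se'}(B(\Ka,\H))$ is $w^*$-continuous in the left slot \emph{without} any normality hypothesis on the map in the right slot, mirroring the built-in left-$w^*$-continuity of $\Box$; dually, the normality of $\Theta(x)$ exploited in step (b) is the operator-side manifestation of the fact that elements of $\Ar \otimes_h \B^{op}$ lie in the topological centre of its bidual.
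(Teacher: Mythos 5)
Your proposal is correct and follows essentially the same route as the paper: the identification is obtained from $(E \otimes_h F)^{**} = E^{**} \otimes_{\sigma h} F^{**}$ together with Magajna's representation theorem, and multiplicativity is proved by the same iterated $w^*$-limit argument, resting on exactly the two continuity facts the paper invokes (composition is $w^*$-continuous in the left slot for arbitrary right factor, and in the right slot when the left factor is normal). The only small imprecision is that an element $x \in \Ar \otimes_h \B^{op}$ is a norm limit of finite sums of elementary tensors rather than a finite sum itself, but your normality claim for $\Theta(x)$ survives since the normal completely bounded maps form a norm-closed subspace.
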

\begin{proof} 
In view of the canonical identification $(E \otimes_h F)^{**} = E^{**} \otimes_{\sigma h} F^{**}$ for operator spaces $E$ and $F$, combined with 
\cite[Theorem 2.2]{mag}, we only need to show that $\Theta$ is multiplicative. 
To see this, let $X,Y \in (\Ar \otimes_h \B^{op})^{**}$, and let $x_i$, $y_j$ be bounded nets in $\Ar \otimes_h \B^{op}$ converging $w^*$ to $X$, respectively, $Y$. 
Note that $\Theta$ is multiplicative on $\Ar \otimes_h \B^{op}$. We then have (with limits in the $w^*$-topology): 
$$\Theta (X \Box Y) = \lim_i \lim_j \Theta(x_i y_j) = \lim_i \lim_j \Theta(x_i) \Theta (y_j) = \lim_i \Theta(x_i) \Theta(Y) =\Theta(X) \Theta(Y),$$ 
as claimed. For the second-last equation, note that every $\Theta(x_i) \in CB^\sigma(B(\Ka , \H))$, and left multiplication in $CB(B(\Ka , \H))$ by such an element is 
$w^*$-continuous; for the last equation, note that right multiplication in $CB(B(\Ka , \H))$ by any element is 
$w^*$-continuous. 
\par 
Finally, note that the restriction of $\Box$ to $\R \otimes_{eh} \Se^{op}$ defines a product on this space since $\Theta$ is multiplicative, 
$\Theta(\R \otimes_{eh} \Se^{op})=CB^{\sigma}_{\R' , \Se '} (B(\Ka , \H))$, and $CB^{\sigma}_{\R' , \Se '} (B(\Ka , \H))$ is an algebra. 
\end{proof} 

\section{The case of Varopoulos algebras} \label{comme} 
\begin{satz} \label{co-re-sc}
Let $\Ar$ be a commutative $C^*$-algebra. If $V_{\Ar}$ is Arens regular, then $\Ar$ is scattered. 
\end{satz}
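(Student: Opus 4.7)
The plan is to combine the identification of the centre $Z(V_\Ar^{**})$ with $\Ar^{**} \otimes_{eh} \Ar^{**}$ established earlier in section \ref{comme}, with Theorem \ref{archivsatz}(i) and the completely bounded module map identification recalled in the preliminaries. Arens regularity of $V_\Ar$ will be translated, through commutativity and the classical Grothendieck inequality, into the coincidence of the extended and normal Haagerup tensor products of $\Ar^{**}$ with itself; this coincidence forces the commutative von Neumann algebra $\Ar^{**}$ to be atomic, which by Theorem \ref{gen1} means precisely that $\Ar$ is scattered.

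More concretely, since $\Ar$ is commutative so is $V_\Ar = \Ar \otg \Ar$, hence $Z_t(V_\Ar^{**}) = Z(V_\Ar^{**})$. The Arens regularity hypothesis means $Z_t(V_\Ar^{**}) = V_\Ar^{**}$, and combined with the earlier theorem of this section giving $Z(V_\Ar^{**}) \cong \Ar^{**} \otimes_{eh} \Ar^{**}$, I would conclude that $V_\Ar^{**} \cong \Ar^{**} \otimes_{eh} \Ar^{**}$ as Banach algebras. On the other hand, Theorem \ref{archivsatz}(i) combined with the commutative Grothendieck inequality $\| \cdot \|_\gamma \leq K \| \cdot \|_h$ identifies $V_\Ar$ with $\Ar \otimes_h \Ar$ as Banach algebras up to equivalence of norms, so, taking biduals and invoking the standard identification $(\Ar \otimes_h \Ar)^{**} = \Ar^{**} \otimes_{\sigma h} \Ar^{**}$ from the preliminaries, we arrive at $V_\Ar^{**} \cong \Ar^{**} \otimes_{\sigma h} \Ar^{**}$. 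Comparing the two descriptions, Arens regularity of $V_\Ar$ is equivalent to
$$\Ar^{**} \otimes_{eh} \Ar^{**} \;=\; \Ar^{**} \otimes_{\sigma h} \Ar^{**}.$$

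Setting $\R := \Ar^{**} \subseteq B(\H)$, the two identifications stated in the preliminaries then translate this equality as $CB^{\sigma}_{\R'}(B(\H)) = CB_{\R'}(B(\H))$: every completely bounded $\R'$-bimodule map on $B(\H)$ is automatically normal. The final step is the structural input: for a commutative von Neumann algebra, this automatic normality forces $\R$ to be atomic. If a non-trivial diffuse summand were present, one could construct a non-normal completely bounded $\R'$-bimodule map on the corresponding $B(\Ka)$ via a Schur-multiplier-type construction on a diffuse factor isomorphic to $L^\infty([0,1])$, in the spirit of Magajna \cite{mag} and Hofmeier--Wittstock \cite{HW}. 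Thus $\R = \Ar^{**}$ must be purely atomic, and Theorem \ref{gen1} concludes that $\Ar$ is scattered.

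The main obstacle I expect is this last structural step: explicitly ruling out diffuse summands of $\Ar^{**}$ by exhibiting a non-normal cb $\R'$-bimodule map. This is where the fine structure of cb module maps on $B(\H)$ and the genuine discrepancy between $\otimes_{eh}$ and $\otimes_{\sigma h}$ for commutative von Neumann algebras really enter, and where the operator-space machinery assembled in section \ref{prelim} earns its keep; the earlier, more formal steps (the passage through $\otimes_h$ and the topological-centre identification) are essentially bookkeeping once Theorem \ref{archivsatz}(i) and the centre theorem of section \ref{comme} are available.
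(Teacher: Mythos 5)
Your reduction is essentially the paper's, modulo bookkeeping: from Arens regularity and commutativity of $V_\Ar$ you conclude that the bidual coincides with its centre, and via the identifications recalled in the preliminaries this becomes the statement that every completely bounded $\R'$-bimodule map on $B(\H)$ is normal, where $\R = \Ar^{**}$. Your detour through Theorem \ref{var} is legitimate (its proof does not use Theorem \ref{co-re-sc}, only Magajna's Theorem 2.4, so there is no circularity), whereas the paper reaches the same equality $CB_{\R}(B(\H)) = CB^{\sigma}_{\R}(B(\H))$ more directly: it fixes a maximal abelian representation of $\R$, observes that the bidual algebra $CB_{\R}(B(\H))$ is commutative, hence contained in its own commutant, and identifies that commutant with the normal maps via \cite[Theorem 2.4]{mag}. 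Also, your appeal to Theorem \ref{archivsatz}(i) is superfluous and slightly misattributed here: for commutative $\Ar$ the Grothendieck inequality alone gives the Banach algebra isomorphism $V_\Ar \cong \Ar \otimes_h \Ar$, which is all that is needed.

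The genuine gap is exactly the step you yourself flag as the main obstacle: that automatic normality of all cb $\R'$-bimodule maps on $B(\H)$ forces the commutative algebra $\R$ to be atomic. You do not prove this; you offer only a vague ``Schur-multiplier-type construction on a diffuse factor'', which is not carried out and is not obviously the right device --- the standard witness of non-normality over a diffuse masa is rather a non-normal conditional expectation of $B(L_2[0,1])$ onto $L_\infty[0,1]$ (an invariant-mean/singular-state construction), not a Schur multiplier. In the paper this step is precisely \cite[Theorem 3.5]{mag}, a nontrivial theorem rather than an exercise, so you should either cite it or supply a complete construction. A second, smaller imprecision: you leave the representation $\R \subseteq B(\H)$ unspecified, although the normality statement you obtain concerns $\R'$-module maps while the conclusion you want concerns $\R$; the paper removes this mismatch by representing $\R$ as a maximal abelian algebra, so that $\R = \R'$, before invoking Magajna's results. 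With these two points repaired, your argument closes and coincides in substance with the paper's proof, the final appeal to Theorem \ref{gen1} being common to both.
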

\begin{proof} 
Since $\Ar$ is commutative, $V_{\Ar} = \Ag$ is isomorphic to $\Ar \otimes_h \Ar$ by the classical Grothendieck Inequality, so the latter is Arens regular. 
Represent the von Neumann algebra $\R := \Ar^{**}$ in $B(\H)$ such that it is maximal commutative; cf.\ \cite[Chapter 5, Proposition 6]{topp}. 
As $\Ar \otimes_h \Ar$ is Arens regular and commutative, $$(\Ar \otimes_h \Ar)^{**} = 
CB_{\R'}(B(\H)) = CB_{\R}(B(\H))$$ 
is commutative. Hence, we have $$CB_{\R}(B(\H)) = CB_{\R}(B(\H))^c \cap CB_{\R}(B(\H)) ,$$ where the commutant is taken in $CB(B(\H))$. Since 
$\R$ is commutative, it has no direct summand of type $I_{J,n}$ with $J$ an infinite cardinal and $n \in \mathbb{N}$. So, we have 
$CB_{\R}(B(\H))^c = CB^\sigma_{\R'}(B(\H))$, by \cite[Theorem 2.4]{mag}. Thus, as $\R = \R'$, we obtain $CB_{\R}(B(\H)) = CB^\sigma_{\R}(B(\H))$. 
By \cite[Theorem 3.5]{mag}, this implies that $\R$ is atomic. Owing to Theorem \ref{gen1}, this is equivalent to $\Ar$ being scattered. 
\end{proof} 
\begin{satz} \label{var}
Let $\Ar$ be a commutative $C^*$-algebra. The (topological) centre of ${V_\Ar}^{**}$ is isomorphic to $\Ar^{**} \otimes_{eh} \Ar^{**}$ 
(via $\varphi^{**}$ where $\varphi : \Ar \otimes_h \Ar \to \Ar \otg \Ar$ is the canonical isomorphism, based on the classical Grothendieck Inequality). 
\end{satz}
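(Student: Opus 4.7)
The plan is to transport the problem from the projective to the Haagerup tensor square via Grothendieck's Inequality, and then read off the centre from Magajna's commutant theorem combined with the bidual identification at the end of Section~\ref{prelim}.

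\textbf{Step 1 (Reduction to the Haagerup side).} Since $\Ar$ is commutative, so is $V_\Ar$, and hence by the remark on commutative Banach algebras in Section~\ref{prelim} one has $Z_t({V_\Ar}^{**}) = Z({V_\Ar}^{**})$. The classical Grothendieck Inequality provides a Banach algebra isomorphism $\varphi \colon \Ar \otimes_h \Ar \to \Ar \otg \Ar$ which is the identity map on the common underlying vector space (with equivalent norms). Consequently $\varphi^{**}$ is a $w^*$-homeomorphic Banach algebra isomorphism of the biduals for their left Arens products, and therefore carries the centre to the centre. It therefore suffices to identify $Z((\Ar \otimes_h \Ar)^{**})$ with $\Ar^{**} \otimes_{eh} \Ar^{**}$ as a subspace of $(\Ar \otimes_h \Ar)^{**}$.

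\textbf{Step 2 (Operator realisation).} I represent $\R := \Ar^{**}$ as a maximal commutative von Neumann subalgebra of some $B(\H)$, so that $\R = \R'$. Since $\Ar^{op} = \Ar$, the theorem closing Section~\ref{prelim} (applied with $\B = \Ar$, $\Ka = \H$, $\Se = \R$) furnishes a completely isometric $w^*$-homeomorphic algebra isomorphism
$$
\Theta \colon (\Ar \otimes_h \Ar)^{**} \to CB_{\R}(B(\H)),
$$
whose restriction yields $\R \otimes_{eh} \R = CB^{\sigma}_{\R}(B(\H))$. Under $\Theta$, the desired identification reduces to
$$
Z(CB_{\R}(B(\H))) = CB^{\sigma}_{\R}(B(\H)).
$$

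\textbf{Step 3 (Computing the centre).} I write $Z(CB_{\R}(B(\H))) = CB_{\R}(B(\H)) \cap CB_{\R}(B(\H))^{c}$, the commutant being taken inside $CB(B(\H))$. Since $\R$ is abelian, it has no direct summand of type $I_{J,n}$ with $J$ infinite and $n \in \mathbb{N}$; hence Magajna's theorem \cite[Theorem~2.4]{mag}, invoked exactly as in the proof of Theorem~\ref{co-re-sc}, gives $CB_{\R}(B(\H))^{c} = CB^{\sigma}_{\R'}(B(\H)) = CB^{\sigma}_{\R}(B(\H))$, the last equality because $\R = \R'$. Since $CB^{\sigma}_{\R}(B(\H)) \subseteq CB_{\R}(B(\H))$, intersecting yields $Z(CB_{\R}(B(\H))) = CB^{\sigma}_{\R}(B(\H)) = \R \otimes_{eh} \R = \Ar^{**} \otimes_{eh} \Ar^{**}$. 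Transporting back along $\Theta$ and then along $\varphi^{**}$ gives the claim.

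\textbf{Expected main difficulty.} No individual step is hard once Magajna's commutant theorem and the bidual identification of Section~\ref{prelim} are at hand; the whole proof reduces to a short algebraic intersection after one observes that commutativity collapses $Z_t$ to $Z$. The only point meriting brief verification is that $\varphi^{**}$ intertwines the left Arens products and is a $w^*$-homeomorphism, which is automatic because $\varphi$ is a Banach algebra isomorphism of equivalent norms on a common underlying vector space.
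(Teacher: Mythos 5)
Your proposal is correct and follows essentially the same route as the paper's own proof: represent $\Ar^{**}$ as a maximal abelian algebra, identify $(\Ar \otimes_h \Ar)^{**}$ with $CB_{\R}(B(\H))$ via $\Theta$, compute the centre using Magajna's commutant theorem to get $CB^{\sigma}_{\R}(B(\H)) = \Theta(\R \otimes_{eh} \R)$, and transport back via $\varphi^{**}$. The only difference is that you spell out explicitly the (routine) verifications concerning $Z_t = Z$ and the behaviour of $\varphi^{**}$, which the paper leaves implicit.
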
 
\begin{proof} 
As above, represent $\R := \Ar^{**}$ in $B(\H)$ such that it is maximal commutative. By \cite[Theorem 2.4]{mag}, we have 
$$CB_{\R}(\B(\H))^c = CB^{\sigma}_{\R '}(\B(\H)) = CB^{\sigma}_{\R}(\B(\H)) ,$$ where the commutant is taken in $CB(B(\H))$. So we see that 
$$Z(CB_{\R}(\B(\H))) = CB_{\R}(\B(\H))^c \cap CB_{\R}(\B(\H)) = CB^{\sigma}_{\R}(\B(\H)).$$ 
Hence we obtain: 
$$\Theta (\R \otimes_{eh} \R) = CB^{\sigma}_{\R}(\B(\H)) = Z(CB_{\R}(\B(\H))) = Z (\Theta ((\Ar \otimes_h \Ar)^{**})) 
= \Theta (Z ((\Ar \otimes_h \Ar)^{**})) .$$
So, $Z ((\Ar \otimes_h \Ar)^{**}) = \Ar^{**} \otimes_{eh} \Ar^{**}$. As $V_\Ar$ is isomorphic to $\Ar \otimes_h \Ar$ (via $\varphi$), the assertion follows. 
\end{proof} 
\begin{kor} \label{irreg} 
Let $\Ar$ be a commutative $C^*$-algebra. 
The Varopoulos algebra $V_\Ar$ is strongly Arens irregular (if and) only if $\Ar$ is finite-dimensional.
\end{kor}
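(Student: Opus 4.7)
The reverse direction is immediate: finite-dimensionality of $\Ar$ makes $V_\Ar = \Ar \otg \Ar$ finite-dimensional, so $V_\Ar = {V_\Ar}^{**}$ and strong Arens irregularity is tautological. The content is the forward implication, which I plan to prove by translating the hypothesis via Theorem \ref{var} into a tensor-product identity and then applying a slice map to force $\Ar$ to be reflexive.

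Assume $V_\Ar$ is strongly Arens irregular. Since $V_\Ar$ is commutative, $Z_t({V_\Ar}^{**}) = Z({V_\Ar}^{**})$, and Theorem \ref{var} identifies this centre with $\varphi^{**}(\Ar^{**} \otimes_{eh} \Ar^{**})$, where $\varphi : \Ar \otimes_h \Ar \to V_\Ar$ is the Grothendieck isomorphism. As $\varphi^{**}$ is an isomorphism carrying the canonical inclusion $\Ar \otimes_h \Ar \hookrightarrow (\Ar \otimes_h \Ar)^{**} = \Ar^{**} \otimes_{\sigma h} \Ar^{**}$ onto $V_\Ar \hookrightarrow {V_\Ar}^{**}$, the hypothesis translates into the subspace equality
$$\Ar^{**} \otimes_{eh} \Ar^{**} = \Ar \otimes_h \Ar$$
inside $\Ar^{**} \otimes_{\sigma h} \Ar^{**}$.

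Next I would pick a character $\phi : \Ar \to \Cede$, which exists since the spectrum of any nonzero commutative $C^*$-algebra is nonempty. Its bidual $\phi^{**}$ is a normal character on $\Ar^{**}$, automatically unital, so $\phi^{**}(1_{\Ar^{**}}) = 1$. Consider the right-slice map $R_\phi : \Ar \otimes_h \Ar \to \Ar$, $a \otimes b \mapsto \phi(b) a$, a bounded linear contraction; its bidual $R_\phi^{**} : \Ar^{**} \otimes_{\sigma h} \Ar^{**} \to \Ar^{**}$ is $w^*$-continuous, restricts to $R_\phi$ on $\Ar \otimes_h \Ar$ (so sends this subspace into $\Ar$), and by separate $w^*$-continuity of $\otimes_{\sigma h}$ satisfies $R_\phi^{**}(x \otimes y) = \phi^{**}(y)\, x$ on elementary tensors with $x, y \in \Ar^{**}$. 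For an arbitrary $x \in \Ar^{**}$, the elementary tensor $x \otimes 1_{\Ar^{**}}$ lies in $\Ar^{**} \otimes_{eh} \Ar^{**}$ and hence, by the displayed identity, in $\Ar \otimes_h \Ar$; applying $R_\phi^{**}$ yields $x = \phi^{**}(1_{\Ar^{**}})\, x = R_\phi^{**}(x \otimes 1_{\Ar^{**}}) \in \Ar$. Thus $\Ar = \Ar^{**}$, so $\Ar$ is reflexive, and consequently finite-dimensional (every infinite-dimensional $C^*$-algebra contains a copy of $c_0$ or $\ell_1$, neither of which is reflexive).

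The principal technical point requiring care is the behavior of $R_\phi^{**}$ on elementary tensors $x \otimes y$ with $x, y \in \Ar^{**}$; verifying the identity $R_\phi^{**}(x \otimes y) = \phi^{**}(y)\, x$ relies on the separate $w^*$-continuity of the normal Haagerup product together with the $w^*$-density of $\Ar \otimes \Ar$ in $\Ar^{**} \otimes_{\sigma h} \Ar^{**}$. Once this standard slice-map fact is in place, no further structural input beyond Theorem \ref{var} is needed.
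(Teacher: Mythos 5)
Your proof is correct, and its second half takes a genuinely different route from the paper's. Both arguments start the same way: strong Arens irregularity, Theorem \ref{var}, and injectivity of $\varphi^{**}$ yield the subspace equality $\Ar^{**} \otimes_{eh} \Ar^{**} = \Ar \otimes_h \Ar$ inside $(\Ar \otimes_h \Ar)^{**} = \Ar^{**} \otimes_{\sigma h} \Ar^{**}$. At that point the paper sandwiches $\Ar \otimes_h \Ar \subseteq \Ar^{**} \otimes_h \Ar^{**} \subseteq \Ar^{**} \otimes_{eh} \Ar^{**}$, concludes that $\Ar^{**} \otimes_h \Ar^{**} = \Ar^{**} \otimes_{eh} \Ar^{**}$ as Banach spaces, and invokes \cite[Corollary 3.8]{itoh-nagisa} to deduce finite-dimensionality; you instead apply the bidual of the right slice map $\mathrm{id} \otimes \phi$ attached to a character $\phi$ to the elementary tensor $x \otimes 1_{\Ar^{**}}$, obtaining $x \in \Ar$ for every $x \in \Ar^{**}$, i.e.\ reflexivity of $\Ar$, hence finite-dimensionality. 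Your route is more elementary and self-contained, avoiding the external Itoh--Nagisa result at the price of the slice-map computation $R_\phi^{**}(x \otimes y) = \phi^{**}(y)\,x$ on elementary tensors of bidual elements --- which you rightly flag, and which does go through by an iterated $w^*$-limit using Goldstine, the separate $w^*$-continuity built into the definition of $\otimes_{eh}$ (applied to the predual $\Ar^* \otimes_{eh} \Ar^*$), and the compatibility of the identification $(\Ar \otimes_h \Ar)^{**} = \Ar^{**} \otimes_{\sigma h} \Ar^{**}$ with elementary tensors. Note also that multiplicativity of $\phi$ is never used: any state $\psi$ satisfies $\psi^{**}(1_{\Ar^{**}}) = \|\psi\| = 1$, so the same slice argument is not really tied to the commutative setting, whereas the paper's appeal to \cite{itoh-nagisa} likewise works verbatim for noncommutative algebras. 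Two cosmetic points: the standard fact is that every infinite-dimensional $C^*$-algebra contains an isomorphic copy of $c_0$ (the ``or $\ell_1$'' is unnecessary), and in the present commutative case one can simply note that a reflexive $C_0(\Omega)$ forces $\Omega$ to be finite.
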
 
\begin{proof} 
Assume that $V_\Ar$ is strongly Arens irregular. Then we have by Theorem \ref{var}: 
$$\varphi^{**} (\Ar^{**} \otimes_{eh} \Ar^{**}) = Z({V_\Ar}^{**}) = V_\Ar = \varphi^{**} (\Ar \otimes_h \Ar).$$ 
Since $\Ar \otimes_h \Ar \subseteq \Ar^{**} \otimes_h \Ar^{**} \subseteq \Ar^{**} \otimes_{eh} \Ar^{**}$, we deduce that 
$\Ar^{**} \otimes_h \Ar^{**} = \Ar^{**} \otimes_{eh} \Ar^{**}$ (as Banach spaces). By \cite[Corollary 3.8]{itoh-nagisa}, this implies that $\Ar$ is finite-dimensional. 
\end{proof} 
The result \cite[Corollary 3.8]{itoh-nagisa} to which we have referred above, implies that for a $C^*$-algebra $\Ar$, the equality $\Ar \otimes_h \Ar = \Ar \otimes_{eh} \Ar$ 
(at the Banach space level) 
holds if and only if $\Ar$ is finite-dimensional. We shall show in the following, in passing, that this equivalence fails for $\Ar = \ell_1$ 
with pointwise product. Note that the latter, endowed with any operator space structure, is completely isomorphic to an operator algebra, as shown by Blecher--Le Merdy; 
cf.\ \cite[Proposition 5.3.3]{BL1}. For our proof, recall that 
a Banach space $E$ is said to have Pelczynski's property (V) if any set $K \subseteq E^*$ satisfying 
$$\sup_{\psi \in K} | \langle \psi , x_n \rangle | \xrightarrow[~n~]{} 0$$ for every weakly unconditionally Cauchy (wuC) series $\sum x_n$ in $E$, is relatively weakly compact. 
Here, $\sum x_n$ being wuC means that $\sum | \langle \psi , x_n \rangle | < \infty$ for all $\psi \in E^*$. 
\begin{satz} 
We have (the Banach space equality) $\ell_1 \otimes_h \ell_1 = \ell_1 \otimes_{eh} \ell_1$. 
\end{satz}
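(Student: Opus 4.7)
The inclusion $\ell_1 \otimes_h \ell_1 \subseteq \ell_1 \otimes_{eh} \ell_1$ is automatic from the definitions, as both spaces embed contractively into $(\ell_\infty \otimes_h \ell_\infty)^*$. My plan is to establish the reverse inclusion by showing that each $u \in \ell_1 \otimes_{eh} \ell_1$ already lies in the projective Banach tensor product $\ell_1 \otg \ell_1$; since $\ell_1$ endowed with any operator space structure is completely isomorphic to an operator algebra (Blecher--Le Merdy), the Haagerup tensor norm on $\ell_1 \otimes \ell_1$ is dominated by a constant multiple of the projective tensor norm, and I thus get a continuous embedding $\ell_1 \otg \ell_1 \hookrightarrow \ell_1 \otimes_h \ell_1$.

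The first move is to reinterpret $u \in \ell_1 \otimes_{eh} \ell_1$ operator-theoretically. By definition of $\otimes_{eh}$, the element $u$ corresponds to a completely bounded, separately $w^*$-continuous bilinear form $\phi \colon \ell_\infty \times \ell_\infty \to \mathbb{C}$. Using $w^*$-continuity in the second slot, $\phi(x, \cdot) \in (\ell_\infty)_* = \ell_1$, so $\phi$ yields a bounded linear operator $T_\phi \colon \ell_\infty \to \ell_1$, $T_\phi(x) := \phi(x, \cdot)$, which moreover is $w^*$-weak continuous by the separate $w^*$-continuity of $\phi$.

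The key step is to show that $T_\phi$ is nuclear, with both factors of the representation in $\ell_1$. Since $\ell_1$ has the Schur property it contains no isomorphic copy of $c_0$; hence every bounded operator into $\ell_1$ is unconditionally converging. By Pelczynski's property (V) of $\ell_\infty$ (valid since $\ell_\infty$ is a commutative $C^*$-algebra), $T_\phi$ is therefore weakly compact. Grothendieck's classical theorem moreover asserts that every bounded operator $\ell_\infty \to \ell_1$ is absolutely summing; combined with the Radon--Nikodym property of $\ell_1$ (a separable dual space), this forces $T_\phi$ to be nuclear. The $w^*$-weak continuity of $T_\phi$ is then used to upgrade the nuclear representation so that both factors lie in $\ell_1$, yielding $u = \sum_n f_n \otimes g_n$ with $f_n, g_n \in \ell_1$ and $\sum_n \|f_n\| \|g_n\| < \infty$; this places $u$ in $\ell_1 \otg \ell_1$, hence in $\ell_1 \otimes_h \ell_1$.

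I expect the main obstacle to be the final technical step of arranging the nuclear representation so that \emph{both} factors lie in $\ell_1$ (a generic nuclear representation has $f_n$ merely in $\ell_\infty^*$): this requires exploiting the $w^*$-weak continuity of $T_\phi$ to replace $\ell_\infty^*$ by its subspace $\ell_1$. One must also verify that the element $\sum_n f_n \otimes g_n$ thus constructed in $\ell_1 \otg \ell_1$ really coincides with $u$ inside the ambient dual space $(\ell_\infty \otimes_h \ell_\infty)^*$ (both induce the same bilinear form on $\ell_\infty \times \ell_\infty$). The essential geometric ingredients are property (V) of $\ell_\infty$ and the Schur property of $\ell_1$ (which together preclude copies of $c_0$ in $\ell_1$), combined with Grothendieck's theorem and the RNP of $\ell_1$.
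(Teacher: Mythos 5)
There is a genuine gap at the decisive step, namely the claim that $T_\phi$ is nuclear. Grothendieck's theorem asserts that every bounded operator from a $C(K)$-space (such as $\ell_\infty$) into $\ell_1$ is $2$-summing, not absolutely ($1$-)summing, and a $2$-summing operator into a space with the RNP need not be nuclear. In fact bounded non-nuclear operators from $c_0$ into $\ell_1$ exist: an element of $\ell_1 \otimes_\gamma \ell_1 \cong \ell_1(\mathbb{N}\times\mathbb{N})$ has absolutely summable coefficient matrix, whereas by the optimality of Littlewood's $4/3$-inequality (seen, e.g., from normalized $n\times n$ Hadamard or Fourier matrices, whose $\ell_1$-coefficient mass exceeds their bilinear-form norm by a factor $\sim\sqrt{n}$, together with the closed graph theorem) there are bounded bilinear forms on $c_0\times c_0$ whose coefficients are not summable. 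Any such form gives a bounded operator $S\colon c_0\to\ell_1$ which, exactly by your own argument (property (V) of $c_0$ plus the Schur property of $\ell_1$), is weakly compact; hence $S^{**}$ maps $\ell_\infty$ into $\ell_1$ and defines a bounded, separately $w^*$-continuous bilinear form on $\ell_\infty\times\ell_\infty$ that is \emph{not} induced by an element of $\ell_1\otimes_\gamma\ell_1$. Since your chain of implications (separate $w^*$-continuity $\Rightarrow$ weak compactness $\Rightarrow$ ``Grothendieck $+$ RNP'' $\Rightarrow$ nuclearity) never uses the complete boundedness of $\phi$, it would apply verbatim to these forms and prove them nuclear --- a contradiction. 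So the step ``weakly compact $+$ Grothendieck $+$ RNP $\Rightarrow$ nuclear with both factors in $\ell_1$'' is precisely where the proof breaks down, and it cannot be repaired without genuinely exploiting the Haagerup/extended Haagerup (i.e., Hilbert-space factorization) structure of $u$; note also that the inclusion you aim for, $\ell_1\otimes_{eh}\ell_1\subseteq\ell_1\otimes_\gamma\ell_1$, is formally stronger than the asserted equality with $\ell_1\otimes_h\ell_1$.

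For contrast, the paper avoids any nuclearity claim by arguing dually: $c_0\otimes_\gamma c_0$ has Pelczynski's property (V) because $c_0$ does and $\ell_1$ has the Schur property \cite{eh}, and by the classical Grothendieck inequality this algebra is isomorphic to $c_0\otimes_h c_0$; hence the dual $\ell_1\otimes_{eh}\ell_1=(c_0\otimes_h c_0)^*$ is weakly sequentially complete \cite{pe}, so its closed subspace $\ell_1\otimes_h\ell_1$ contains no copy of $c_0$, and Pisier's criterion \cite[Exercise 5.1]{pisier} then yields $\ell_1\otimes_h\ell_1=(c_0\otimes_h c_0)^*=\ell_1\otimes_{eh}\ell_1$. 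You may also note that the appeal to Blecher--Le Merdy is not needed for the easy inclusion, since $\|\cdot\|_h\le\|\cdot\|_\gamma$ holds on any tensor product of operator spaces.
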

\begin{proof} 
Note that, as shown by Pelczynski \cite{pe}, $c_0$ has property (V); cf.\ also \cite[p.\ 351]{pf}. 
Since $c_0^* = \ell_1$ has the Schur property, \cite[Corollary 5]{eh} implies that $c_0 \otimes_\gamma c_0$ has property (V). But by the classical Grothendieck Theorem, 
the Banach algebras $c_0 \otimes_\gamma c_0$ and $c_0 \otimes_h c_0$ are isomorphic. 
So, $c_0 \otimes_h c_0$ has property (V). Hence, its dual $\ell_1 \otimes_{eh} \ell_1$ is weakly sequentially complete, again by a well-known result of Pelczynski \cite{pe}; see also \cite[p.\ 351]{pf}. 
\par 
Thus, $\ell_1 \otimes_h \ell_1$, as a closed subspace of $\ell_1 \otimes_{eh} \ell_1$, 
is also weakly sequentially complete, so it does not contain an isomorphic copy of $c_0$. Now, by a result of Pisier \cite[Exercise 5.1]{pisier}, we have 
$$\ell_1 \otimes_h \ell_1 = (c_0 \otimes_h c_0)^* = \ell_1 \otimes_{eh} \ell_1 ,$$ 
as desired. 
\end{proof} 
\begin{bem} The following is a quick geometric proof of the well-known result that $c_0 \otimes_\gamma c_0$ is Arens regular. -- As noted above, 
since $c_0$ has property (V) and its dual has the Schur property, $c_0 \otimes_\gamma c_0$ has property (V). In view of \cite[Corollary I.2]{gi}, this implies Arens regularity. 
\end{bem} 

\section{The case of general $C^*$-algebras} \label{general}
Our goal is to prove the complete characterization given below of the Arens regularity of $\Ar \otg \Ar$. 
\begin{satz} \label{haupt}
Let $\Ar$ be a $C^*$-algebra. Then the following are equivalent: 
\begin{itemize} 
\item[(i)] $\Ar \otg \Ar$ is Arens regular; 
\item[(ii)] $\Ar$ has the Phillips property; equivalently, $\Ar$ is scattered and has the DPP. 
\end{itemize} 
In fact, if (ii) holds, then $\Ar \otimes_\gamma \B$ is Arens regular for any $C^*$-algebra $\B$. 
\end{satz}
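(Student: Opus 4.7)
My plan is to handle the two implications separately, exploiting the operator-space identification of the bidual of a Haagerup tensor product as an algebra of completely bounded bimodule maps together with the automatic-normality machinery of Magajna. Throughout, set $\R := \Ar^{**}$ and, for a second $C^*$-algebra $\B$, $\Se := \B^{**}$, represented in $B(\H)$ and $B(\Ka)$ respectively.

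For the implication (ii) $\Rightarrow$ (i) --- in fact for the stronger claim that the Phillips property implies Arens regularity of $\Ar \otg \B$ for every $C^*$-algebra $\B$ --- I would reduce via Theorem~\ref{archivsatz}(ii) to Arens regularity of both $\Ar \otimes_h \B$ and $\Ar \otimes_h \B^{op}$. Using the identification $(\Ar \otimes_h \B^{op})^{**} = CB_{\R', \Se'}(B(\Ka, \H))$ together with the multiplicativity $\Theta(X \Box Y) = \Theta(X)\Theta(Y)$, the $X$ in the topological centre are exactly those for which left composition by $\Theta(X)$ is $w^*$-continuous, so Arens regularity of $\Ar \otimes_h \B^{op}$ translates into the automatic-normality statement
\[
CB_{\R', \Se'}(B(\Ka, \H)) = CB^\sigma_{\R', \Se'}(B(\Ka, \H)).
\]
By the same Magajna-type structure theorems used in the proof of Theorem~\ref{co-re-sc}, this equality is ensured when $\R$ is finite atomic, which via Theorem~\ref{gen2} is precisely the Phillips property; applying the argument with $\B$ and $\B^{op}$ exchanged handles $\Ar \otimes_h \B$.

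For (i) $\Rightarrow$ (ii), assume $\Ar \otg \Ar$ is Arens regular. Theorem~\ref{archivsatz}(i) yields that $\Ar \otimes_h \Ar$ and, by symmetric application to $\Ar^{op}$, $\Ar \otimes_h \Ar^{op}$ are Arens regular. Mimicking the proof of Theorem~\ref{co-re-sc} but without the commutativity shortcut, Arens regularity together with the identification forces the automatic-normality equality above with $\Se$ playing the role of $\R^{op}$; Magajna's results then give that $\R$ is atomic, so $\Ar$ is scattered by Theorem~\ref{gen1}. To upgrade atomicity to \emph{finite} atomicity --- equivalently, to the Phillips property via Theorem~\ref{gen2} --- I would argue by contradiction: an infinite-dimensional type-$I$ summand $B(\H_0) \subseteq \R$ would, via its central support projection and an appropriate corner of $\Ar$, yield an inherited Arens-regular projective tensor square $K(\H_0) \otg K(\H_0)$. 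But $K(\H_0)$ fails DPP --- witnessed in its predual $\Sz_1$ by the weakly null unit-norm sequence $|e_n\rangle\langle f_n|$ of rank-one operators --- and this failure must be shown to prevent Arens regularity of the projective tensor square.

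The main obstacle is precisely this last step, which I expect to split into two tasks. First, cleanly descending from an infinite-dimensional atomic summand of $\Ar^{**}$ to a $C^*$-subalgebra of $\Ar$ (in effect, identifying $K(\H_0)$ as a corner of $\Ar$) in a way compatible with the projective norm, so that Arens regularity is genuinely inherited --- this requires a conditional-expectation or central-projection argument on $\Ar$ itself rather than merely on its enveloping von Neumann algebra. Second, converting failure of DPP into an explicit obstruction to Arens regularity, by producing from weakly null sequences $(x_n) \subseteq K(\H_0)$ and $(\varphi_n) \subseteq \Sz_1$ with $\langle \varphi_n, x_n \rangle \not\to 0$ a bilinear form on $K(\H_0) \otg K(\H_0)$ that violates the iterated-limit criterion characterizing Arens regularity. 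By contrast, the direction (ii) $\Rightarrow$ (i) should reduce to a direct application of Magajna's automatic-normality theorems once finite atomicity is in place.
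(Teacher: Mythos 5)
Your direction (ii) $\Rightarrow$ (i) is essentially the paper's argument: reduce via Theorem \ref{archivsatz}(ii) to the Haagerup tensor products, identify the biduals with $CB_{\R',(\Se^{op})'}$ resp.\ $CB_{\R',\Se'}$, and get automatic normality from Magajna (the one detail you gloss over is that the normality result \cite[Lemma 3.4]{mag} is applied to the \emph{module} algebra $\R'$, so the paper first puts $\R$ in standard form to ensure $\R'\cong\R^{op}$ is also finite atomic). The genuine gaps are in (i) $\Rightarrow$ (ii). First, your derivation of atomicity does not work as stated: Arens regularity of $\Ar\otimes_h\Ar^{op}$ says that left composition by each $\Theta(X)$ is $w^*$-continuous \emph{as a map on the bidual algebra} $CB_{\R',\Se'}(B(\Ka,\H))$; it does not say that $\Theta(X)$ is a normal map on $B(\Ka,\H)$, so it does not "force" $CB_{\R',\Se'}=CB^{\sigma}_{\R',\Se'}$ -- only the converse implication is available, and that is what (ii) $\Rightarrow$ (i) uses. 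In Theorem \ref{co-re-sc} the commutativity is not a shortcut but the whole mechanism: there the bidual is commutative, hence contained in its own commutant, and \cite[Theorem 2.4]{mag} identifies that commutant with the normal maps. The paper avoids your leap entirely by passing to commutative $C^*$-subalgebras $\Ar_0$ (Arens regularity is inherited by the subalgebras $\Ar_0\otimes_h\Ar_0$), applying Theorem \ref{co-re-sc} there, and invoking Kusuda's lemma that a $C^*$-algebra all of whose commutative $C^*$-subalgebras are scattered is itself scattered. (Also, your "symmetric application" of Theorem \ref{archivsatz}(i) to get Arens regularity of $\Ar\otimes_h\Ar^{op}$ would require Arens regularity of $\Ar\otimes_\gamma\Ar^{op}$, which is not among the hypotheses.)

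Second, the step from atomic to \emph{finite} atomic -- which is the heart of (i) $\Rightarrow$ (ii) -- is left open by your own account. Producing weakly null sequences in $K(\H_0)$ and its predual with non-vanishing pairing does not by itself yield a pair of sequences in $K(\H_0)\otg K(\H_0)$ and a functional violating the iterated-limit criterion; no mechanism for that conversion is proposed, and your descent from a summand of $\Ar^{**}$ to $\Ar$ produces $K(\H_0)$ as a quotient (via an irreducible representation), not a corner or subalgebra, so even the inheritance of regularity needs an argument through quotients. The paper's actual finiteness argument is entirely different: if some $\H_{i_0}=:\H$ were infinite-dimensional, then $CB_{B(\H)'}(B(\Ka))\cong CB(B(\H))$ embeds into $(\Ar\otimes_h\Ar)^{**}$ by \cite[Theorem 3.2 and Corollary 3.3]{witt}, so Arens regularity would make composition in $CB(B(\H))$ separately $w^*$-continuous; realizing $\H=\ell_2(\Ge)$ for a discrete group $\Ge$ and using the isometric $w^*$-continuous embedding of $(\ell_1(\Ge)^{**},\Box)$ into $CB(B(\ell_2(\Ge)))$, this would give $Z_t(\ell_1(\Ge)^{**})=\ell_1(\Ge)^{**}$, contradicting the Lau--Losert theorem $Z_t(\ell_1(\Ge)^{**})=\ell_1(\Ge)$ for infinite $\Ge$ \cite{la-lo}. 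So while your (ii) $\Rightarrow$ (i) matches the paper, the converse direction in your proposal is missing its two key ideas.
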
 
\begin{kor} \label{vN} 
Let $\Ar$ be a von Neumann algebra. Then the following are equivalent: 
\begin{itemize} 
\item[(i)] $\Ar \otg \Ar$ is Arens regular; 
\item[(ii)] $\Ar$ is finite-dimensional. 
\end{itemize} 
\end{kor}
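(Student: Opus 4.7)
The ``if'' direction is trivial (finite-dimensionality of $\Ar$ makes $\Ar \otg \Ar$ finite-dimensional and hence automatically Arens regular), so the plan focuses on the forward direction, which is really a clean corollary of Theorem \ref{haupt} combined with the fact that a von Neumann algebra is a direct summand of its bidual. Assuming $\Ar \otg \Ar$ is Arens regular, Theorem \ref{haupt} gives that $\Ar$ has the Phillips property, and Theorem \ref{gen2}(iv) then yields that the enveloping von Neumann algebra $\Ar^{**}$ is finite atomic, i.e., $\Ar^{**} \cong \bigoplus^{\ell^\infty}_{i \in I} M_{n_i}$ with each $n_i \in \mathbb{N}$.

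Next, I would exploit the universal property of $\Ar^{**}$: since $\Ar$ is itself a von Neumann algebra, the identity $\mathrm{id}_\Ar$ extends to a normal surjective $*$-homomorphism $\pi : \Ar^{**} \to \Ar$, whose kernel is weak-$*$ closed and two-sided, hence of the form $(1-z)\Ar^{**}$ for a central projection $z \in \Ar^{**}$. Thus $\Ar \cong z\Ar^{**}$. Since central projections in a direct sum of matrix algebras correspond to subsets of the indexing set, this forces $\Ar \cong \bigoplus^{\ell^\infty}_{j \in J} M_{n_j}$ for some $J \subseteq I$, with each $n_j$ finite.

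Finally, I would rule out $J$ being infinite: were it so, $Z(\Ar) \supseteq \ell^\infty(J)$ would be a commutative von Neumann algebra with non-scattered spectrum $\beta J$, and hence by Theorem \ref{gen1}(i)$\Leftrightarrow$(ii) would contain an isomorphic copy of $\ell_1$. This copy sits inside $\Ar$, contradicting the scatteredness of $\Ar$ coming from Theorem \ref{gen2}(ii) combined with Theorem \ref{gen1}(ii). Hence $J$ is finite, each $n_j$ is finite, and $\Ar$ is finite-dimensional. The only non-citational ingredient is the splitting $\Ar^{**} = \Ar \oplus \ker \pi$, which is a standard feature of von Neumann algebras; no serious obstacle arises, as Theorem \ref{haupt} has already done the heavy lifting.
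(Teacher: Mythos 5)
Your argument is correct, but it takes a genuinely different route from the paper for the implication (i) $\Rightarrow$ (ii). The paper disposes of it in one line: Theorem \ref{haupt} gives the Phillips property, and then it cites Freedman--\"Ulger (\cite[Corollary 2.14]{fu}), which says that \emph{any dual Banach space} with the Phillips property is finite-dimensional. You instead stay inside operator-algebra theory: from Theorem \ref{gen2}(iv) you get that $\Ar^{**}$ is a direct sum of matrix algebras, you use the canonical normal surjection $\Ar^{**} \to \Ar$ (equivalently, $\Ar \cong z\Ar^{**}$ for a central projection $z$) to see that $\Ar$ itself is an $\ell_\infty$-direct sum $\bigoplus_{j \in J} M_{n_j}$, and you rule out infinite $J$ by producing a copy of $\ell_1$ inside $\ell_\infty(J) \subseteq \Ar$, contradicting scatteredness via Theorem \ref{gen1}(ii). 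All steps are sound; the only citation-level gloss is your passage from ``$\beta J$ is non-scattered'' to ``$\ell_\infty(J)$ is a non-scattered $C^*$-algebra'' (the standard fact that $C(K)$ is scattered iff $K$ is), which you could bypass entirely by recalling that $\ell_\infty$ contains an isomorphic copy of every separable Banach space, in particular of $\ell_1$. What each approach buys: the paper's proof is shorter and exhibits the corollary as an instance of a purely Banach-space phenomenon (no dual infinite-dimensional space has the Phillips property), while yours is self-contained modulo the facts already quoted in the paper and actually shows more in this setting -- scatteredness alone suffices, since an infinite-dimensional von Neumann algebra always contains an infinite orthogonal family of projections, hence a copy of $\ell_\infty$ and so of $\ell_1$; thus for von Neumann algebras already Theorem \ref{scatt} (rather than the full Theorem \ref{haupt}) forces finite-dimensionality.
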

\begin{proof} 
This follows from our Theorem \ref{haupt} combined with \cite[Corollary 2.14]{fu}, which states that a dual Banach space with the Phillips property is finite-dimensional.  
\end{proof} 
\begin{bem} 
As a fundamental example, we obtain that $\ell_\infty \otimes_\gamma \ell_\infty$ is Arens irregular. This solves a problem which has been open since the initial 
work of \cite{ljes} in 1981. 
\end{bem} 
We will now show that Corollary \ref{vN} does not extend to the setting of non-selfadjoint dual operator algebras, even commutative ones. Recall that a dual operator algebra 
is, by definition, a $w^*$-closed subalgebra of $B(\mathcal{H})$, for some Hilbert space $\mathcal{H}$. 
For $p \in [1, \infty )$, we shall consider $\ell_p$ with pointwise multiplication. We note that, given any Banach $\Ar$, the algebra $\ell_p \otg \Ar$ is Arens regular if and only if $\Ar$ is; 
cf.\ \cite[Corollaries 4.7 and 4.12]{u1}. 
We denote by $O \ell_p$ Pisier's operator space 
structure on $\ell_p$, obtained by complex interpolation. 
\begin{prop} 
Let $p \in [1, \infty )$. Then the algebra $\ell_p$ ($O \ell_p$) with pointwise product is (completely) isomorphic and $w^*$-homeomorphic to a commutative dual operator algebra with Arens regular 
projective tensor square. 
\end{prop}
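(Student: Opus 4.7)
The plan is to decouple the statement into two independent claims: (i) $\ell_p \otg \ell_p$, equipped with pointwise product on each factor, is Arens regular as a Banach algebra; and (ii) $\ell_p$, respectively $O \ell_p$, with pointwise product admits a (completely) isomorphic and $w^*$-homeomorphic realization as a commutative dual operator algebra. Once both are in hand, any such realization furnishes the required dual operator algebra whose projective tensor square is Arens regular.

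For (i), I would apply the \"Ulger result \cite[Corollaries 4.7 and 4.12]{u1} cited in the paragraph preceding the proposition: $\ell_p \otg \Ar$ is Arens regular if and only if $\Ar$ is. Instantiating with $\Ar = \mathbb{C}$ shows that $\ell_p$ itself is Arens regular, and then with $\Ar = \ell_p$ yields (i). For $1 < p < \infty$ the Arens regularity of $\ell_p$ is anyway immediate from reflexivity; for $p = 1$ it can be verified directly from the Schur property of $\ell_1$ together with the elementary inequality $\|ab\|_1 \le \|a\|_\infty \|b\|_1$, which, for weakly (hence, by Schur, norm) null sequences, forces both iterated limits in the Arens formalism to vanish simultaneously.

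For (ii), I would invoke Blecher--Le Merdy \cite[Proposition 5.3.3]{BL1} to produce a completely isomorphic embedding of $O \ell_p$ (with pointwise product) as an operator algebra $\mathcal{A}_0 \subseteq B(\H)$. When $1 < p < \infty$, reflexivity of $\ell_p$ (and of $O \ell_p$) allows one to pass to the bidual $\mathcal{A}_0^{**}$, which is completely isomorphic and $w^*$-homeomorphic to $\mathcal{A}_0$ itself and admits a normal representation as a $w^*$-closed commutative subalgebra of some $B(\mathcal{K})$. When $p = 1$, reflexivity is lost, and one must instead observe that $\ell_1 = c_0^*$ is a dual Banach space with separately $w^*$-continuous pointwise multiplication (its preadjoint being pointwise multiplication on $c_0$) and that $O \ell_1$ is a dual operator space; an appeal to Le Merdy's abstract characterization of dual operator algebras then produces the desired $w^*$-closed realization. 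Commutativity of the target is automatic, since the abstract product is pointwise.

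The main obstacle will be this final step in the case $p = 1$: without reflexivity one must ensure that the Blecher--Le Merdy representation can be chosen $w^*$-continuous, which rests on the dual operator space structure of $O \ell_1$ combined with the separate $w^*$-continuity of pointwise multiplication. For $p \in (1, \infty)$ the bidual construction makes this step routine.
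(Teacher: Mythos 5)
Your overall route coincides with the paper's: \"Ulger's \cite[Corollaries 4.7 and 4.12]{u1} for Arens regularity of $\ell_p \otg \ell_p$, and Blecher--Le Merdy/Le Merdy for the realization of $O\ell_p$ as a commutative dual operator algebra (the paper invokes \cite[Corollary 5.3.5]{BL1} -- not Proposition 5.3.3, which concerns $\ell_1$ -- together with \cite[Theorem 5.2.16]{BL1}, uniformly in $p$, where you split off $1<p<\infty$ via reflexivity and the bidual; that variant is fine). However, there is a genuine gap at the very point where you declare the proof finished: ``any such realization furnishes the required dual operator algebra whose projective tensor square is Arens regular'' is not automatic. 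The statement asks for Arens regularity of $\Ar_p \otg \Ar_p$, where $\Ar_p$ is the dual operator algebra produced by Le Merdy's theorem, and $\Phi : \ell_p \to \Ar_p$ is only a (completely bounded) isomorphism, not an isometry; hence $\Ar_p \otg \Ar_p$ is a priori a different Banach algebra from $\ell_p \otg \ell_p$, and Arens regularity must be transported through the induced map $\Phi \otimes \Phi : \ell_p \otg \ell_p \to \Ar_p \otg \Ar_p$. This transfer is exactly what the paper's proof spends most of its effort on: it checks that $\Phi \otimes \Phi$ is an algebra homomorphism and, crucially, that it is surjective (via the identification $\ell_p \otg \ell_p = \mathcal{N}((\ell_p)_*, \ell_p)$ and a series representation $z = \sum x_n \otimes y_n$ with $\sum \|x_n\|\|y_n\| < \infty$), so that $\Ar_p \otg \Ar_p$ is a homomorphic image of an Arens regular algebra. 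The gap is fillable -- e.g.\ one can also note that $\Phi^{-1} \otimes \Phi^{-1}$ is a bounded inverse on elementary tensors, so $\Phi \otimes \Phi$ is an isomorphism of Banach algebras -- but some such argument must be supplied; as written, your proposal treats the crux as a formality.

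Two smaller points. First, your parenthetical verification of Arens regularity of $\ell_1$ via the Schur property is not a proof as stated: the double limit criterion involves arbitrary bounded sequences, not weakly null ones (one can instead note that each multiplication operator on $\ell_1$ is compact, or simply rely on \"Ulger's corollary with $\Ar = \mathbb{C}$ as you do). Second, for $p=1$ your appeal to Le Merdy's characterization needs the product on $O\ell_1 = (c_0)^*$ to be separately $w^*$-continuous and the algebra to be (completely isomorphic to) an operator algebra -- both true, and this is precisely the paper's uniform argument, so for $p=1$ you are in effect reproducing it.
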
 
\begin{proof} 
First, note that $O \ell_p$ is completely isomorphic to an operator algebra, as shown by Blecher--Le Merdy; cf.\ \cite[Corollary 5.3.5]{BL1}. Since $O \ell_p$ is a dual operator space, and 
the product is separately $w^*$-continuous, there exists a commutative dual operator algebra $\Ar_p$ and a Banach algebra isomorphism and $w^*$-homeomorphism $\Phi : O \ell_p \to \Ar_p$ 
such that $\Phi$ and $\Phi^{-1}$ are completely bounded, by a result of Le Merdy's; cf.\ \cite[Theorem 5.2.16]{BL1}. Let us now prove (working in the 
Banach algebra category) that $\Ar_p \otg \Ar_p$ is Arens regular. As mentioned above, the Arens regularity of $\ell_p$ passes to $\ell_p \otg \ell_p$. 
Since $\Phi \otimes \Phi : \ell_p \otg \ell_p \to \Ar_p \otg \Ar_p$ is an algebra homomorphism, 
it suffices to show that it is surjective. As $\ell_p$ has the approximation property, we have the isometric identification 
$\ell_p \otg \ell_p = \mathcal{N} ( (\ell_p)_* , \ell_p)$, where the latter space denotes the nuclear operators 
(we write $(\ell_p)_*$ for the predual to avoid distinguishing the cases $p=1$ and $p>1$). Now let $z \in \Ar_p \otg \Ar_p$. Then we have a series representation $z = \sum x_n \otimes y_n$ with 
$\sum \| x_n \| \| y_n \| < \infty$. Hence, $z_0 := \sum \Phi^{-1} (x_n) \otimes \Phi^{-1} (y_n) \in \mathcal{N} ( (\ell_p)_* , \ell_p)$, and $(\Phi \otimes \Phi)(z_0)=z$, as desired. 
\end{proof} 
We now return to the $C^*$-case. Noting that commutative $C^*$-algebras have the DPP, our Theorem \ref{haupt} immediately yields the following. 
\begin{kor} \label{reg} 
Let $\Ar$ be a commutative $C^*$-algebra. Then the Varopoulos algebra $V_\Ar$ is Arens regular if and only if $\Ar$ is scattered. 
\end{kor}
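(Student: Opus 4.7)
The plan is a short reduction to Theorem~\ref{haupt}, combined with a classical fact from Banach space theory. Since $\Ar$ is commutative, $V_\Ar$ is by definition the Banach algebra $\Ar \otg \Ar$, so Theorem~\ref{haupt} applies directly and tells us that $V_\Ar$ is Arens regular if and only if $\Ar$ is scattered and has the DPP. It therefore suffices to observe that the DPP hypothesis is automatic in the commutative case: this is Grothendieck's classical theorem \cite{gro} that every $C(K)$-space, hence every commutative $C^*$-algebra $C_0(\Omega)$, has the Dunford--Pettis Property. With this observation, the characterisation of Theorem~\ref{haupt}(ii) collapses to ``$\Ar$ is scattered'' in the commutative setting, and the corollary follows.

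As a consistency check, the forward direction (Arens regular $\Rightarrow$ scattered) can alternatively be obtained independently from Theorem~\ref{co-re-sc}, which was established earlier in the paper via Magajna's structure theorems for completely bounded module maps on $B(\H)$ and does not invoke the DPP at all. The substantive content of Corollary~\ref{reg} is therefore the reverse implication, and this is precisely where the non-trivial content of Theorem~\ref{haupt} is needed: namely, the passage from ``scattered plus DPP'' to the Phillips property (equivalently, $\Ar^{**}$ being finite atomic), and from there to Arens regularity of $\Ar \otg \Ar$.

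I do not anticipate any real obstacle here, since Theorem~\ref{haupt} has already absorbed all the difficulty of the argument; the only bookkeeping point is to cite, or briefly justify, Grothendieck's DPP theorem for $C_0(\Omega)$, which is entirely standard.
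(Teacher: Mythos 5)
Your argument is exactly the paper's: the corollary is deduced from Theorem~\ref{haupt} together with the classical fact (Grothendieck) that every commutative $C^*$-algebra has the DPP, so condition (ii) reduces to scatteredness. The extra remark that the forward direction also follows from Theorem~\ref{co-re-sc} is correct but not needed.
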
 
The following consequence of Corollaries \ref{irreg} and \ref{reg} provides a natural class of Banach algebras that are neither Arens regular nor strongly Arens irregular. 
\begin{kor} 
Let $\Ar$ be a commutative $C^*$-algebra. Then $V_\Ar$ is neither Arens regular nor strongly Arens irregular, if and only if $\Ar$ is non-scattered.
\end{kor}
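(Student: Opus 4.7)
The plan is to derive this directly by combining the two preceding corollaries with one elementary observation about commutative $C^*$-algebras. Writing out the logic, $V_\Ar$ being neither Arens regular nor strongly Arens irregular means simultaneously (a) $V_\Ar$ is not Arens regular and (b) $V_\Ar$ is not strongly Arens irregular. By Corollary \ref{reg}, condition (a) is equivalent to $\Ar$ being non-scattered. By Corollary \ref{irreg}, condition (b) is equivalent to $\Ar$ being infinite-dimensional. So the target statement reduces to the claim that, for commutative $C^*$-algebras, non-scatteredness is equivalent to the conjunction ``non-scattered and infinite-dimensional''.

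For the forward direction, I would simply read off the contrapositive of Corollary \ref{reg}: if $V_\Ar$ is not Arens regular, then $\Ar$ is non-scattered. (The failure of strong Arens irregularity is not even needed in this direction.) For the converse, assume $\Ar$ is non-scattered. Corollary \ref{reg} immediately gives that $V_\Ar$ is not Arens regular. To obtain that $V_\Ar$ is not strongly Arens irregular, by Corollary \ref{irreg} it suffices to verify that $\Ar$ is infinite-dimensional; but this is automatic, since any finite-dimensional commutative $C^*$-algebra is isomorphic to $\mathbb{C}^n$ for some $n \in \mathbb{N}$, whose spectrum is finite and discrete, hence scattered (every positive functional is trivially a finite sum of pure functionals). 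So non-scatteredness forces infinite-dimensionality, and both conditions (a) and (b) hold.

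There is really no obstacle here: the argument is purely a Boolean combination of Corollaries \ref{reg} and \ref{irreg}, together with the elementary remark that finite-dimensional commutative $C^*$-algebras are scattered. I would write it as a short two-way implication, citing the two corollaries explicitly and noting the trivial implication ``finite-dimensional $\Rightarrow$ scattered'' to close the one subtlety in the converse direction.
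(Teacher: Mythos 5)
Your argument is correct and is exactly the intended one: the paper states this corollary as an immediate consequence of Corollaries \ref{irreg} and \ref{reg}, which is precisely the Boolean combination you carry out, with the (true and standard) observation that finite-dimensional commutative $C^*$-algebras are scattered closing the converse. Note you only need the proven ``only if'' direction of Corollary \ref{irreg} (strong Arens irregularity implies finite-dimensionality), which is the direction you in fact use.
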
 
\begin{bei} \label{li} 
The case $\Ar = \ell_\infty$ provides a natural example for the above situation; note that $\ell_\infty = C(\beta \mathbb{N})$ and $\beta \mathbb{N}$ is non-scattered. By 
our Theorem \ref{var}, the (topological) centre of ${V_{\ell_\infty}}^{**}$ is isomorphic to ${\ell_\infty}^{**} \otimes_{eh} {\ell_\infty}^{**}$. 
\end{bei}
In order to prove Theorem \ref{haupt}, we start with the generalization of our Theorem \ref{co-re-sc} to arbitrary $C^*$-algebras. 
\begin{satz} \label{scatt}
Let $\Ar$ be a $C^*$-algebra. If $\Ag$ is Arens regular, then $\Ar$ is scattered. 
\end{satz}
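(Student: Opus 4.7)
The plan is to reduce to the commutative situation already settled in Theorem \ref{co-re-sc}, via passage to abelian $C^*$-subalgebras of $\Ar$. My first move would be to invoke Theorem \ref{archivsatz}(i): Arens regularity of $\Ar \otg \Ar$ yields Arens regularity of $\Ar \otimes_h \Ar$. This transfer is essential, since the Haagerup tensor product, unlike $\otg$, is injective under complete isometries and hence well-behaved with respect to $C^*$-subalgebra inclusions.

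Next, I would fix an arbitrary commutative $C^*$-subalgebra $\mathcal{C} \sus \Ar$. By injectivity of $\otimes_h$ with respect to completely isometric inclusions, $\mathcal{C} \otimes_h \mathcal{C}$ sits completely isometrically as a closed subalgebra of $\Ar \otimes_h \Ar$. A standard verification---the bidual $(\mathcal{C} \otimes_h \mathcal{C})^{**}$ is the $w^*$-closure of $\mathcal{C} \otimes_h \mathcal{C}$ inside $(\Ar \otimes_h \Ar)^{**}$, compatible with both Arens products---shows that Arens regularity descends to closed subalgebras, so $\mathcal{C} \otimes_h \mathcal{C}$ is Arens regular. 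The classical commutative Grothendieck Inequality supplies an equivalence of norms between $\|\cdot\|_h$ and $\|\cdot\|_\gamma$ on $\mathcal{C} \otimes \mathcal{C}$; since Arens regularity depends only on the Banach algebra structure up to equivalent renorming, the Varopoulos algebra $V_\mathcal{C} = \mathcal{C} \otg \mathcal{C}$ is also Arens regular. Theorem \ref{co-re-sc} then forces $\mathcal{C}$ to be scattered.

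I would then specialize to $\mathcal{C} = C^*(a)$ for each self-adjoint $a \in \Ar$, concluding that every such singly generated abelian $C^*$-subalgebra is scattered, or equivalently, that every self-adjoint element of $\Ar$ has countable spectrum. By the classical characterization of scattered $C^*$-algebras (cf.\ Theorem \ref{gen1} together with the structure theory of Jensen), this forces $\Ar$ itself to be scattered.

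I expect the main obstacle to lie in this last step: the implication ``$C^*(a)$ scattered for every self-adjoint $a \in \Ar$ $\Rightarrow$ $\Ar$ scattered'' is a classical but delicate result in the structure theory of scattered $C^*$-algebras, and must be invoked (or derived from Theorem \ref{gen1}) with care. The technical ingredients in the middle paragraph---injectivity of the Haagerup tensor product on complete-isometric inclusions, and heredity of Arens regularity under closed subalgebras---are standard but deserve explicit justification in the write-up.
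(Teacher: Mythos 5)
Your proposal follows essentially the same route as the paper's proof: transfer Arens regularity from $\Ar \otg \Ar$ to $\Ar \otimes_h \Ar$ via Theorem \ref{archivsatz}(i), use injectivity of the Haagerup tensor product together with heredity of Arens regularity under closed subalgebras to conclude that $\mathcal{C} \otimes_h \mathcal{C}$ is Arens regular for every commutative $C^*$-subalgebra $\mathcal{C} \subseteq \Ar$, pass back to $V_\mathcal{C}$ by the commutative Grothendieck inequality, and apply Theorem \ref{co-re-sc} to get that $\mathcal{C}$ is scattered. The only divergence is the final local-to-global step. The paper closes by quoting \cite[Lemma 2.2]{kusuda}: a $C^*$-algebra all of whose commutative $C^*$-subalgebras are scattered is itself scattered --- a result you could invoke directly, since at that point you have already established scatteredness of \emph{every} commutative $C^*$-subalgebra, not just the singly generated ones. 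Your detour through $C^*(a)$ and countable spectra instead relies on the spectral characterization of scattered $C^*$-algebras (every self-adjoint element has countable spectrum implies scattered, due to Huruya); this is a correct classical theorem, but it does not follow from Theorem \ref{gen1} as your write-up suggests, so that attribution should be corrected or, more simply, the step replaced by the citation of Kusuda's lemma used in the paper.
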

\begin{proof} 
Since $\Ag$ is Arens regular, so is $\Ar \otimes_h \Ar$, according to Theorem \ref{archivsatz} (i). Now, 
let $\Ar_0$ be a commutative $C^*$-subalgebra of $\Ar$. Then $\Ar_0 \otimes_h \Ar_0$ is 
Arens regular as a subalgebra of $\Ar \otimes_h \Ar$. By our Theorem \ref{co-re-sc}, this entails that $\Ar_0$ is scattered. Hence, we see that 
every commutative $C^*$-subalgebra of $\Ar$ is scattered, which implies that $\Ar$ is scattered, by \cite[Lemma 2.2]{kusuda} (in fact, the last implication is an equivalence). 
\end{proof} 
We now complete the proof of the implication (i) $\Longrightarrow$ (ii) in Theorem \ref{haupt}. 
\begin{satz} 
Let $\Ar$ be a $C^*$-algebra. If $\Ag$ is Arens regular, then $\Ar$ has the DPP. 
\end{satz}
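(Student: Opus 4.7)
The plan is to adapt the proof of Theorem \ref{co-re-sc} to the non-commutative setting, replacing the commutative applications of Magajna's theorems by their bimodule versions and then extracting finite-atomicity of $\R := \Ar^{**}$. By Theorem \ref{archivsatz}(i), Arens regularity of $\Ar \otg \Ar$ forces Arens regularity of $\Ar \otimes_h \Ar$; together with the already-established Theorem \ref{scatt} and Theorem \ref{gen1}, this makes $\R$ atomic, and I fix a faithful normal representation $\R \sus B(\H)$. By Theorem \ref{gen2}, the DPP for $\Ar$ is equivalent to $\R$ being \emph{finite} atomic, i.e.\ a direct sum of matrix algebras, so this is what must be shown.

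Applying the preliminary bidual identification with $\B = \Ar^{op}$ (so that $\B^{op} = \Ar$ and $\Se = \R^{op}$) yields the completely isometric, multiplicative $w^*$-homeomorphism
$$\Theta\colon (\Ar \otimes_h \Ar)^{**} \longrightarrow CB_{\R',\, (\R^{op})'}(B(\Ka, \H)),$$
sending the left Arens product to operator composition; its normal subalgebra restricts to $\R \otimes_{eh} \R = CB^{\sigma}_{\R',\, (\R^{op})'}(B(\Ka, \H))$. Arens regularity of $\Ar \otimes_h \Ar$ then says exactly that the topological centre of the composition algebra $CB_{\R',\, (\R^{op})'}(B(\Ka, \H))$ is the whole algebra. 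The next step is to identify this centre with $CB^{\sigma}_{\R',\, (\R^{op})'}(B(\Ka, \H))$: the inclusion $\supseteq$ is immediate since normal composition is jointly $w^*$-continuous, while the reverse inclusion is obtained by testing left composition against $w^*$-convergent nets of rank-one multiplications in $CB^{\sigma}$, which detects any failure of normality of the left factor. Consequently every completely bounded $(\R', (\R^{op})')$-bimodule map on $B(\Ka, \H)$ is automatically normal, and invoking the bimodule analogue of \cite[Theorem~3.5]{mag} forces $\R$ to be finite atomic, yielding the DPP.

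A more hands-on way to close the last step is to reduce to a putative infinite-dimensional atomic summand $B(\H_0)$ of $\R$: by Jensen's structure theorem for scattered $C^*$-algebras \cite{jen} the associated irreducible representation $\pi \colon \Ar \to B(\H_0)$ has $K(\H_0) \sus \pi(\Ar)$, and Arens regularity of $\Ar \otimes_h \Ar$ passes both to the quotient $\pi(\Ar) \otimes_h \pi(\Ar)$ (biduals intertwine Arens products under surjective algebra homomorphisms) and to the subalgebra $K(\H_0) \otimes_h K(\H_0)$ (injectivity of $\otimes_h$ for operator spaces). But the bidual of the latter is $CB(B(\Ka_0, \H_0))$, since $B(\H_0)$ has trivial commutant, and for infinite-dimensional $\H_0$ this algebra carries genuinely non-normal CB maps---built for instance from a Banach limit via the diagonal embedding $\ell_\infty \hookrightarrow B(\H_0)$---so its topological centre is a proper subalgebra, a contradiction. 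The main obstacle is the automatic-normality step: identifying the topological centre with $CB^{\sigma}$ in the bimodule setting, which is the non-commutative counterpart of the crucial step in the proof of Theorem \ref{co-re-sc}.
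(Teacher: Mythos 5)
Your reduction is sound and in fact close in spirit to the paper's own: Theorem \ref{scatt} plus Theorem \ref{gen1} give atomicity of $\R=\Ar^{**}$, and the passage from a putative infinite-dimensional summand $B(\H_0)$ to Arens regularity of $K(\H_0)\otimes_h K(\H_0)$, whose bidual is identified with $CB(B(\H_0))$, is legitimate (the paper performs the analogous reduction inside $CB_{\R'}(B(\Ka))$ using \cite{witt}). The gap is in how you derive the contradiction from there. Arens regularity of $K(\H_0)\otimes_h K(\H_0)$ says that for every $X$ in the bidual the map $Y\mapsto X\Box Y$ is continuous for the abstract topology $\sigma\bigl((K(\H_0)\otimes_h K(\H_0))^{**},(K(\H_0)\otimes_h K(\H_0))^{*}\bigr)$, i.e.\ that left composition by $\Theta(X)$ is $w^*$-continuous in that topology; this is \emph{not} the same as normality of $\Theta(X)$ as a map on $B(\H_0)$. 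Hence ``$CB(B(\H_0))$ contains non-normal maps (Banach limit on the diagonal), so the topological centre is proper'' is a non sequitur: a non-normal element could a priori still act $w^*$-continuously by composition. Your first-paragraph repair --- that the topological centre of $CB_{\R',(\R^{op})'}(B(\Ka,\H))$ equals $CB^{\sigma}_{\R',(\R^{op})'}(B(\Ka,\H))$, justified by ``testing against $w^*$-convergent nets of rank-one multiplications'' --- is exactly this missing point and is asserted rather than proved: you would need (a) that such nets of normal maps converge in the bidual's $w^*$-topology when their symbols converge $w^*$ in $B(\Ka,\H)$ (the bidual topology restricted to $\R\otimes_{eh}\R^{op}$ is not its natural dual-space topology), and (b) that pairing the composed maps against the predual of $(\Ar\otimes_h\Ar)^{**}$ recovers enough slices of $\Theta(X)$ to detect non-normality; neither is routine. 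Note that the commutative model you appeal to (Theorems \ref{co-re-sc} and \ref{var}) never argues this way: there the \emph{algebraic} centre is computed from Magajna's commutant theorem \cite[Theorem 2.4]{mag}, for which no topological-centre analogue is available here; likewise the ``bimodule analogue of \cite[Theorem 3.5]{mag}'' you invoke is not in the cited literature in the form you need.

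The paper closes exactly this gap by a different device: realizing $\H_0$ as $\ell_2(\Ge)$ for an infinite discrete group $\Ge$, it embeds $\ell_1(\Ge)^{**}$ with its left Arens product isometrically, multiplicatively and $w^*$-continuously into $CB(B(\ell_2(\Ge)))$ via $\langle\Theta(m)T\xi,\eta\rangle=m_t(\langle L_tTL_{t^{-1}}\xi,\eta\rangle)$. Separate $w^*$-continuity of composition would then force $Z_t(\ell_1(\Ge)^{**})=\ell_1(\Ge)^{**}$, contradicting the Lau--Losert theorem $Z_t(\ell_1(\Ge)^{**})=\ell_1(\Ge)$ \cite{la-lo} for infinite $\Ge$. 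Some genuinely nontrivial input of this kind (at the very least, the classical fact that $\ell_1(\Ge)$ is Arens regular only for finite $\Ge$) is what must replace your automatic-normality step; without it, the mere existence of non-normal completely bounded maps on $B(\H_0)$ contradicts nothing.
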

\begin{proof} 
By Theorem \ref{scatt}, $\Ar$ is scattered, whence it remains to show, 
in view of Theorems \ref{gen1} and \ref{gen2}, that the atomic von Neumann algebra $\R := \Ar^{**}$ is finite. 
Atomicity means that $\R$ is a direct sum of $B(\H_i)$, 
and we need to show that all Hilbert spaces $\H_i$ are finite-dimensional. Assume towards a contradiction that some $\H_{i_0} =: \H$ is infinite-dimensional. 
Let $\Ka$ be a Hilbert space such that $\R \subseteq B(\Ka)$. Then we have: 
$$(\Ar \otimes_h \Ar)^{**} = CB_{\R '} (B(\Ka)) \supseteq CB_{B(\H)'} (B(\Ka)).$$ 
By \cite[Theorem 3.2 and Corollary 3.3]{witt}, the latter is isomorphic to $CB_{B(\H)'}(B(\H))=CB(B(\H))$. 
\par 
As $\Ag$ is Arens regular, so is $\Ar \otimes_h \Ar$, by Theorem \ref{archivsatz} (i). Hence, the above entails that multiplication in 
$CB(B(\H))$ is separately $w^*$-continuous. Realize $\H$ as $\ell_2(\Ge)$ where $\Ge$ is a discrete group of cardinality $\text{dim}(\H)$, and note that 
$\ell_1(\Ge)^{**}$, endowed with the left Arens product, embeds isometrically and $w^*$-continuously 
as a subalgebra in $CB(B(\ell_2(\Ge)))$; cf.\ \cite[Satz 2.1.1]{neu}, \cite[Remark 3.7]{nrs}, \cite[Proposition 6.5]{hnr}. 
Indeed, the map $\Theta : \ell_\infty(\Ge)^* \to CB(B(\ell_2(\Ge)))$ given by 
$$\langle \Theta(m) T \xi , \eta \rangle = m_t ( \langle L_t T L_{t^{-1}} \xi , \eta \rangle ) ,$$ 
where $m \in \ell_\infty(\Ge)^*$, $T \in B(\ell_2(\Ge))$, and $\xi , \eta \in \ell_2(\Ge)$, yields such an embedding (here, $L_t$ denotes left translation by $t \in \Ge$). 
This entails that the (left) topological centre $Z_t(\ell_1(\Ge)^{**})$ equals $\ell_1(\Ge)^{**}$. But $Z_t(\ell_1(\Ge)^{**})=\ell_1(\Ge)$, by 
\cite[Theorem 1]{la-lo}; cf.\ also \cite[Theorem 1.1]{neu-archiv}. Thus we obtain that $\G$ is finite, a contradiction. 
\end{proof} 
The following completes the proof of Theorem \ref{haupt}. 
\begin{satz} 
Let $\Ar$ be a scattered $C^*$-algebra with the DPP. Then $\Ar \otimes_\gamma \B$ is Arens regular for any $C^*$-algebra $\B$. 
\end{satz}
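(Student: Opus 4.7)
The plan is to reduce Arens regularity of $\Ar \otg \B$ to an automatic $w^*$-continuity statement for completely bounded bimodule maps, which we then verify using the finite atomic structure of $\R := \Ar^{**}$ furnished by Theorem~\ref{gen2}.

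First, since $\B^{op}$ is again a $C^*$-algebra, Theorem~\ref{archivsatz}(ii) reduces the task to showing that both $\Ar \otimes_h \B$ and $\Ar \otimes_h \B^{op}$ are Arens regular. As the hypothesis on $\Ar$ is insensitive to replacing $\B$ with $\B^{op}$, it is enough to prove the following in full generality: \emph{for any $C^*$-algebra $\C$, the Banach algebra $\Ar \otimes_h \C^{op}$ is Arens regular}. To this end I would invoke the identification satz at the end of Section~\ref{prelim}, which exhibits $(\Ar \otimes_h \C^{op})^{**}$ as $CB_{\R', \mathcal{D}'}(\B(\Ka, \H))$ via a multiplicative $w^*$-homeomorphism (with $\mathcal{D} := \C^{**} \sus \B(\Ka)$). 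As observed in the proof of that satz, right multiplication by any element of this algebra is automatically $w^*$-continuous, while left multiplication by a given element is $w^*$-continuous precisely when that element itself lies in $CB^\sigma_{\R', \mathcal{D}'}(\B(\Ka, \H))$. Consequently, Arens regularity of $\Ar \otimes_h \C^{op}$ is equivalent to the automatic-normality statement
$$CB_{\R', \mathcal{D}'}(\B(\Ka, \H)) = CB^\sigma_{\R', \mathcal{D}'}(\B(\Ka, \H)).$$

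The crux is then to verify this equality under our hypothesis, which by Theorem~\ref{gen2} amounts to $\R = \bigoplus_{i \in I} M_{n_i}$ with each $n_i$ finite. My approach is a block-decomposition: the central projections $p_i$ of $\R$ lie simultaneously in $\R$ and $\R'$, so any $\Phi \in CB_{\R', \mathcal{D}'}(\B(\Ka, \H))$ commutes with every $p_i$ and decomposes as a direct sum $\Phi = \sum_i \Phi_i$, where each $\Phi_i$ maps $\B(\Ka, p_i\H)$ into itself. On the $i$-th block one has $p_i\H \cong \mathbb{C}^{n_i} \otimes \mathcal{L}_i$ with $\R$ acting as $M_{n_i}$ on the first factor and commutant $\B(\mathcal{L}_i)$; since $n_i$ is finite, a slice-map computation using the matrix units of $M_{n_i}$ together with the $\B(\mathcal{L}_i)$-module property exhibits $\Phi_i$ as a finite linear combination of elementary maps $T \mapsto aTy$ with $a \in M_{n_i}$, $y \in \mathcal{D}^{op}$, each of which is manifestly $w^*$-continuous.

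The main obstacle I anticipate is the assembly of these blocks when $I$ is infinite --- already the fundamental case $\Ar = c_0$ gives $\R = \ell_\infty$ with countably many rank-one summands, and one must verify that $\sum_i \Phi_i$ defines a $w^*$-continuous map with CB norm controlled by $\|\Phi\|_{cb}$. I would address this by adapting the converse direction of the Magajna-type automatic-normality results from \cite{mag} that underlie the proofs of Theorems~\ref{co-re-sc} and \ref{scatt} --- namely, the statement that finite atomicity of $\R$ forces every CB $\R'$-bimodule map on $\B(\H)$ to be normal --- to the two-sided bimodule setting over $\B(\Ka, \H)$. Once the automatic-normality equality is established, Arens regularity of $\Ar \otimes_h \C^{op}$ follows for every $C^*$-algebra $\C$, and hence by Theorem~\ref{archivsatz}(ii) so does Arens regularity of $\Ar \otg \B$ for every $C^*$-algebra $\B$.
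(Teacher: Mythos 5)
Your overall route is the same as the paper's: Theorem \ref{archivsatz}(ii) reduces the statement to Arens regularity of $\Ar \otimes_h \B$ and $\Ar \otimes_h \B^{op}$, the identification theorem at the end of Section \ref{prelim} converts this into the automatic-normality statement $CB_{\R' , \Se'}(B(\Ka , \H)) = CB^{\sigma}_{\R' , \Se'}(B(\Ka , \H))$ (with $\Se$ the bidual of $\B$ or $\B^{op}$ as appropriate; note that only the implication ``every element normal $\Rightarrow$ regular'' is needed and justified --- your ``precisely when'' is an unproved, though harmless, converse), and Theorem \ref{gen2} supplies finite atomicity of $\R = \Ar^{**}$. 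The difference lies in how the automatic-normality equality is established, and this is where your proposal has a genuine gap. The paper obtains it in one stroke from \cite[Lemma 3.4]{mag}, which is already formulated for two-sided completely bounded module maps on $B(\Ka , \H)$ --- no ``adaptation to the two-sided bimodule setting'' is needed --- after first representing $\R$ in standard form, so that $\R' \cong \R^{op}$ is finite atomic as well. That choice of representation is not cosmetic: it is what makes the hypotheses of the cited lemma (and any hands-on argument) available, and it is absent from your proposal.

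Concretely, in your block decomposition the passage $\Phi = \sum_i \Phi_i$ via the central projections $p_i$ is fine, but the per-block claim is not justified as stated. After peeling off the matrix units of $M_{n_i}$, what remains are completely bounded maps on $B(\Ka , \mathcal{L}_i)$ (with $\mathcal{L}_i$ the multiplicity space of the block) that are left $B(\mathcal{L}_i)$-module and right module maps over the commutant of the right-hand algebra; you need these to be right multiplications by elements of that algebra. For an arbitrary representation $\mathcal{L}_i$ may be infinite-dimensional, and then this is itself an automatic-normality statement of essentially the same depth as the one being proved: the elementary module-map computation identifies the map only on finite-rank (hence compact) operators, and excluding a singular part is exactly the content of the Hofmeier--Wittstock/Magajna results. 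In standard form the $\mathcal{L}_i$ are finite-dimensional, and the block maps do reduce to bounded right module maps on the row space $B(\Ka , \mathbb{C}) = \Ka^*$, where reflexivity makes every bounded map automatically $w^*$-continuous and the module property forces the form $f \mapsto f y$; but even then the assembly over infinitely many blocks, which you flag and leave open, still needs an argument (for instance, $w^*$-convergence along bounded nets via the trace-norm approximation $\omega q_F \to \omega$ for $q_F = \sum_{i \in F} p_i$, followed by Krein--Smulian). As written, the crucial step is therefore either missing or deferred to an unspecified modification of \cite{mag}; quoting \cite[Lemma 3.4]{mag} directly, together with the standard-form observation, is what closes the gap in the paper.
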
 
\begin{proof} 
By Theorem \ref{gen2}, $\R := \Ar^{**}$ is finite atomic. Let $\H$ be such that $\R$ is represented in standard form in $B(\H)$. 
Then $\R$ is isomorphic to $(\R')^{op}$, so it follows that $\R'$ is also finite atomic. Put $\Se := \B^{**}$. 
\par 
We first show that $\Ar \otimes_h \B$ is Arens regular. To this end, choose $\Ka$ such that $\Se^{op} \subseteq B(\Ka)$. Then we obtain: 
$$(\Ar \otimes_h \B)^{**} = CB_{\R' , (\Se^{op})'} (B(\Ka , \H)) = CB^{\sigma}_{\R' , (\Se^{op})'} (B(\Ka , \H)) ,$$ 
using in the second step \cite[Lemma 3.4]{mag}. 
Similarly, $\Ar \otimes_h \B^{op}$ is Arens regular. Indeed, choosing $\Ka$ such that $\Se \subseteq B(\Ka)$, we have: 
$$(\Ar \otimes_h \B^{op})^{**} = CB_{\R' , \Se '} (B(\Ka , \H)) = CB^{\sigma}_{\R' , \Se '} (B(\Ka , \H)) ,$$ 
using again \cite[Lemma 3.4]{mag}. 
As $\Ar \otimes_h \B$ and $\Ar \otimes_h \B^{op}$ are Arens regular,  it follows that $\Ar \otimes_\gamma \B$ is too, by Theorem \ref{archivsatz} (ii). 
\end{proof}

{} \vspace{0.7cm} 
Author's affiliations: 
\\[2ex] 
School of Mathematics and Statistics, Carleton University, 1125 Colonel By Drive, Ottawa, Ontario K1S 5B6, Canada \\ 
Email: mneufang@math.carleton.ca 
\\[1ex] 
and 
\\[1ex] 
Laboratoire de Math\'ematiques Paul Painlev\'e (UMR CNRS 8524), 
Universit\'e Lille 1 -- Sciences et Technologies, UFR de Math\'ematiques,
59655 Villeneuve d'Ascq Cedex, France \\ 
Email: matthias.neufang@math.univ-lille1.fr 

\end{document}